\newtheorem{theorem}{Theorem}[section]
\newtheorem{corollary}[theorem]{Corollary}
\newtheorem{proposition}[theorem]{Proposition}
\newtheorem{lemma}[theorem]{Lemma}
\newtheorem{question}[theorem]{Question}
\theoremstyle{definition}
\newtheorem{remark}[theorem]{Remark}
\newtheorem{example}[theorem]{Example}
\newcommand{\uhr}{\upharpoonright}
\def\N{{\mathbb N}}
\def\R{{\mathbb R}}
\def\Z{{\mathbb Z}}
\begin{document}
\title[Two Weak Forms of Countability Axioms in Free Topological Groups]
{Two Weak Forms of Countability Axioms\\ in Free Topological Groups}

  \author{Fucai Lin}
  \address{(Fucai Lin): School of mathematics and statistics,
  Minnan Normal University, Zhangzhou 363000, P. R. China}
  \email{linfucai2008@aliyun.com}

  \author{Chuan Liu}
  \address{(Chuan Liu): Department of Mathematics,
  Ohio University Zanesville Campus, Zanesville, OH 43701, USA}
  \email{liuc1@ohio.edu}

  \author{Jiling Cao}
  \address{(Jiling Cao): School of Engineering, Computer and
  Mathematical Sciences, Auckland University of Technology,
  Private Bag 92006, Auckland 1142, New Zealand}
  \email{jiling.cao@aut.ac.nz}

  \thanks{The first author is supported by the NSFC (Nos. 11571158, 11201414, 11471153),
  the Natural Science Foundation of Fujian Province (No. 2012J05013) of China
  and Training Programme Foundation for Excellent Youth Researching Talents
  of Fujian's Universities (JA13190) and the project of Abroad Fund of Minnan
  Normal University. The third author acknowledges the support and hospitality
  of Minnan Normal University, when he was appointed a Min Jiang
  Scholar and Visitng Professor from 2014 to 2016. This paper was partially
  written when the first author was visiting the School of Computer and
  Mathematical Sciences at Auckland University of Technology from March
  to September 2015, and he wishes to thank the hospitality of his host.}

  \keywords{$csf$-countable; free Abeilain topological group; free
  topological group; $k$-space; La\v{s}nev space; metrizable space;
  regular $G_{\delta}$-diagonal; sequential; $snf$-countable.}
  \subjclass[2000]{Primary 54H11, 22A05; Secondary  54E20; 54E35; 54D50; 54D55.}

  \begin{abstract}
  Given a Tychonoff space $X$, let $F(X)$ and $A(X)$ be respectively the free
  topological group and the free Abelian topological group over $X$ in the sense
  of Markov. For every $n\in\mathbb{N}$, let $F_{n}(X)$ (resp. $A_n(X)$) denote
  the subspace of $F(X)$ (resp. $A(X)$) that consists of words of reduced length
  at most $n$ with respect to the free basis $X$. In this paper, we discuss two
  weak forms of countability axioms in $F(X)$ or $A(X)$, namely the
  $csf$-countability and $snf$-countability. We provide some characterizations
  of the $csf$-countability and $snf$-countability of $F(X)$ and $A(X)$ for
  various classes of spaces $X$. In addition, we also study the $csf$-countability
  and $snf$-countability of $F_n(X)$ or $A_n(X)$, for $n=2, 3, 4$. Some results of
  Arhangel'ski\v\i\ in \cite{A1980} and Yamada in \cite{Y1998} are generalized.
  An affirmative answer to an open question posed by Li et al. in \cite{LLL} is
  provided.
  \end{abstract}

  \maketitle

  \section{Introduction}
  In 1941, Markov \cite{MA1945} introduced the concepts of the free topological
  group $F(X)$ and the free Abelian topological group $A(X)$ over a Tychonoff
  space $X$, respectively. Since then, free topological groups have been a
  source of various examples and also an interesting topic of study in the
  theory of topological groups, see \cite{AT2008}. From the algebraic point
  of view, the structure of $F(X)$ or $A(X)$ is very simple - it is the free
  algebraic group over the set $X$. However, the topological structure of
  $F(X)$ and $A(X)$ is rather complicated even for simple spaces $X$. For
  example, it is a well known fact that if $X$ is a non-discrete space,
  then neither $F(X)$ nor $A(X)$ is Fr\'echet-Uryshon, and hence first
  countable, see \cite{A1980}. This fact motivates researchers to investigate
  free topological groups in two directions. The first direction of the research
  on free topological groups is to study some weak forms of countability
  axioms in $F(X)$ and $A(X)$ over certain classes of spaces $X$. In this
  line, Arhangel'ski\v\i\ et al. \cite{AOP1989} considered the following
  questions on $F(X)$ and $A(X)$ over a metrizable space $X$: \emph{For
  which spaces $X$, is $F(X)$ or $A(X)$ a $k$-space? When is the tightness
  of $F(X)$ or $A(X)$ countable?} They proved that $F(X)$ is a $k$-space
  iff $X$ is locally compact separable or discrete; $A(X)$ is a $k$-space
  iff $X$ is locally compact and $X'$ is separable, where $X'$ is the
  derived set of $X$. Furthermore, the tightness of $F(X)$ is countable
  iff $X$ is separable or discrete, and the tightness of $A(X)$ is
  countable iff $X'$ is separable.

  The other direction of research on free topological groups is to study
  (weak) countability axioms of $F_n(X)$ or $A_n(X)$, where $F_n(X)$
  (resp. $A_n(X)$) stands for the subset of $F(X)$ (resp. $A(X)$) formed
  by all words whose reduced length is at most $n$. Indeed, Yamada
  \cite{Y2002} showed that for a metrizable space $X$, $F_3(X)$ or
  $A_3(X)$ is Fr\'echet-Uryshon iff $X'$ is compact, and $F_5(X)$ is
  Fr\'echet-Uryshon iff $X$ is compact or discrete. As applications,
  characterizations of a metrizable space $X$ are given such that
  $A_n(X)$ is Fr\'echet-Uryshon for each $n\ge 3$, and $F_n(X)$ is
  Fr\'echet-Uryshon for each $n\ge 3$ except for $n=4$. The subspaces
  $F_4(X)$ and $A_4(X)$ are very special cases. In \cite{Y1998}, Yamada
  proved that for a metrizable space $X$, the following are equivalent:
  (i) $F_n(X)$ is metrizable for each $n\in \N$; (ii) $F_n(X)$ is first
  countable for each $n\in \N$; (iii) $F_4(X)$ is metrizable; (iv)
  $F_4(X)$ is first countable; (v) $X$ is compact or discrete. In the
  same paper, Yamada also studied the first countability of $F_n(X)$
  and $A_n(X)$ for $n=2,3$. It is proved that for a metrizable space
  $X$, the following are equivalent: (i) $F_{3}(X)$ is metrizable;
  (ii) $F_{3}(X)$ is first-countable; (iii) $F_{2}(X)$ is metrizable;
  (iv) $F_{2}(X)$ is first-countable; (v) $X'$ is compact. Furthermore,
  for a metrizable space $X$, the following are also equivalent: (i)
  $A_{2}(X)$ is first-countable; (ii) $A_{2}(X)$ is metrizable; (iii)
  $A_{n}(X)$ is first-countable for each $n\in\mathbb{N}$; (iv)
  $A_{n}(X)$ is metrizable for each $n\in\mathbb{N}$; (v) $X'$ is
  compact.

  Recently, Li et al. \cite{LLL} continued the study of $F(X)$ and $A(X)$
  along the afore-mentioned first direction. They studied several weak
  forms of countability axioms of $F(X)$ and $A(X)$ defined by networks
  over some classes of generalized metric spaces $X$. More precisely,
  they studied the concepts of $sn$-networks, $cs$-networks, $cs^*$-networks
  in $F(X)$, $A(X)$, and their subspaces $F_n(X)$ and $A_n(X)$. Two types
  of countability axioms defined by these concepts, namely $snf$-countability
  and $csf$-countability,
  were considered. Among many other things, Li et al. established the
  following results: For a metrizable and crowded space $X$, $F(X)$ or
  $A(X)$ is $csf$-countable iff $X$ is separable; For a stratifiable
  $k$-space $X$, $F(X)$ is $snf$-countable iff $X$ is discrete.
  However, the authors of \cite{LLL} did not consider the
  $snf$-countability and $csf$-countability of $F_n(X)$ and $A_n(X)$.

  In the paper, we continue the study of free topological group $F(X)$
  and the free Abelian topological group $A(X)$ in the afore-mentioned
  two directions. In particular, we investigate the $csf$-countability and
  the $snf$-countability of $F(X)$, $A(X)$, $F_n(X)$ and $A_n(X)$ over
  various classes of generalized metric spaces $X$. In Section 2,
  we introduce the necessary notation and terminologies which are
  used for the rest of the paper. In Section 3, we investigate the
  $snf$-countability of free (Abelian) topological groups. First, we
  provide some characterizations of the $snf$-countability of $F(X)$,
  $A(X)$, $F_n(X)$ and $A_n(X)$ over certain classes of topological
  spaces. The main theorem in this section generalizes a result of
  Yamada in \cite{Y1998}. Section 4 is devoted to the study of the
  $csf$-countability of $F(X)$, $A(X)$, $F_n(X)$ and $A_n(X)$. It is
  shown that for a non-discrete La\v{s}nev space $X$, $F_4(X)$ is
  $csf$-countable iff $F_4(X)$ is an $\aleph_0$-space. This result
  gives an affirmative answer to an open question in \cite{LLL}. It
  is also shown that for a sequential $\mu$-space $X$, if $X'$
  has a countable $cs^*$-network in $X$, then $F_3(X)$ and $A(X)$
  are $csf$-countable. Finally, we pose several interesting open
  questions in the last section.

  Throughout this paper, all topological spaces are assumed to be
  at least Tychonoff, unless explicitly stated otherwise.

  \section{Notation and Terminologies}

  In this section, we introduce the necessary notation and terminologies.
  First of all, let $\N$, $\Z$ and $\R$ denote the sets of all positive
  integers, all integers and all real numbers, respectively. For undefined
  terminologies, the reader may refer to \cite{AT2008}, \cite{E1989} and
  \cite{Gr}.

  Let $X$ be a topological space $X$ and $A \subseteq X$ be a subset of $X$.
  The \emph{closure} of $A$ in $X$ is denoted by $\overline{A}$ and the
  \emph{diagonal} of $X$ is denoted by $\Delta (X)$. The subset $A$ is called
  \emph{$C^{\ast}$-embedded in $X$} if every bounded continuous real-valued
  function $f$ defined on $A$ has a bounded continuous extension over $X$.
  Moreover, $A$ is called \emph{bounded} if every continuous real-valued
  function $f$ defined on $A$ is bounded. If the closure of every bounded
  set in $X$ is compact, then $X$ is called a \emph{$\mu$-space}. The
  \emph{derived set}  of $X$ is denoted by $X'$. We say that $X$ is
  \emph{crowded} if $X = X'$. Recall that $X$ is said to have a
  \emph{$G_{\delta}$-diagonal} (resp. \emph{regular $G_\delta$-diagonal})
  if $\Delta(X)$ is a $G_\delta$-set (resp. regular $G_\delta$-set) in $X
  \times X$. A pseudometric $d$ on $X$ is said to be \emph{continuous} if
  $d$ is continuous as a mapping from the product space $X\times X$ to
  $\mathbb{R}$. The space $X$ is called a \emph{$k$-space} provided that a
  subset $C\subseteq X$ is closed in $X$ if $C\cap K$ is closed in $K$ for
  each compact subset $K$ of $X$. If there exists a family of countably many
  compact subsets $\{K_{n}: n\in\mathbb{N}\}$ of $X$ such that each subset
  $F$ of $X$ is closed in $X$ provided that $F\cap K_n$ is closed in $K_n$
  for each $n\in\mathbb{N}$, then $X$ is called a \emph{$k_\omega$-space}.
  Note that every $k_{\omega}$-space is a $k$-space. In addition, $X$ is
  called a \emph{$cf$-space} if every compact subset of $X$ is finite. A
  subset $P$ of $X$ is called a \emph{sequential neighborhood} \cite{FR1965}
  of $x \in X$, if each sequence converging to $x$ is eventually in $P$.
  A subset $U$ of $X$ is called \emph{sequentially open} if $U$ is a
  sequential neighborhood of each of its points. The space $X$ is called
  a \emph{sequential space} if each sequentially open subset of $X$ is
  open. Let $\kappa$ be an infinite cardinal. For each $\alpha\in\kappa$,
  let $T_{\alpha}$ be a sequence converging to $x_\alpha\not\in T_\alpha$.
  Let $T:=\bigoplus_{\alpha\in\kappa}(T_\alpha\cup\{x_\alpha\})$ be the
  topological sum of $\{T_{\alpha}\cup \{x_{\alpha}\}: \alpha\in\kappa\}$.
  Then $S_{\kappa} :=\{x\}  \cup \bigcup_{\alpha\in\kappa}T_{\alpha}$ is
  the quotient space obtained from $T$ by identifying all the points
  $x_{\alpha}\in T$ to the point $x$.

  Let $\mathscr P$ be a family of subsets of $X$. Then, $\mathscr P$ is called
  a \emph{$cs$-network} \cite{G1971} at a point $x\in X$ if for every sequence
  $\{x_n: n \in \N\}$ converging to $x$ and an arbitrary open neighborhood
  $U$ of $x$ in $X$ there exist an $m\in\N$ and an element $P\in
  \mathscr P$ such that
  \[
  \{x\}\cup \{x_n: n\geqslant m\} \subseteq P \subseteq U.
  \]
  The space $X$ is called \emph{csf-countable} if $X$ has a countable $cs$-network
  at each point $x\in X$. We call $\mathscr P$ a \emph{$cs^{\ast}$-network}
  \cite{LT1994} of $X$ if for every sequence $\{x_n: n\in\N\}$ converging to a
  point $x$ and an arbitrary open neighborhood $U$ of $x$ in $X$ there
  is an element $P\in\mathscr P$ and a subsequence $\{x_{n_{i}}: i\in \N\}$ of
  $\{x_n: n\in \N\}$ such that $\{x\}\cup\{x_{n_{i}}: i\in \mathbb{N}\}
  \subseteq P\subseteq U$. Furthermore, $\mathscr P$ is called a {\it $k$-network}
  \cite{O1971} if for every compact subset $K$ of $X$ and an arbitrary open set
  $U$ containing $K$ in $X$ there is a finite subfamily $\mathscr {P}^{\prime}
  \subseteq \mathscr {P}$ such that $K\subseteq \bigcup\mathscr {P}^{\prime}
  \subseteq U$. Recall that a space $X$ is an \emph{$\aleph$-space} (resp.
  {\it $\aleph_{0}$-space}) if $X$ has a $\sigma$-locally finite (resp.
  countable) $k$-network. Let $\mathscr P$ be a cover of $X$ such that (i)
  $\mathscr P = \bigcup_{x\in X}\mathscr{P}_{x}$; (ii) for each point $x\in X$,
  if $U,V\in \mathscr{P}_{x}$, then $W\subseteq U\cap V$ for some $W\in
  \mathscr{P}_{x}$; and (iii) for each point $x\in X$ and each open neighborhood
  $U$ of $x$ there is some $P\in\mathscr P_x$ such that $x\in P \subseteq U$.
  Then, $\mathscr P$ is called an \emph{sn-network} \cite{Lin1996} for $X$ if
  for each point $x\in X$, each element of $\mathscr P_x$ is a sequential
  neighborhood of $x$ in $X$, and $X$ is called \emph{snf-countable}
  \cite{Lin1996} if $X$ has an $sn$-network $\mathscr P$ and $\mathscr P_x$
  is countable for all $x\in X$. The following implications follow directly
  from definitions:
  \[
  \mbox{first countable} \Rightarrow \mbox{$snf$-countable} \Rightarrow
  \mbox{$csf$-countable}.
  \]
  Note that none of the above implications can be reversed. It is well known
  that $S_{\omega}$ is $csf$-countable but not
  $snf$-countable. Moreover, any space without non-trivial convergent
  sequences is $snf$-countable, see Example~\ref{e11}.

  Given a group $G$, let $e_G$ denote the neutral element of $G$. If no
  confusion occurs, we simply use $e$ instead of $e_G$ to denote the neutral
  element of $G$. Let $N: G \to \mathbb R$ be a function. We call $N$ a
  \emph{pre-norm} on $G$ if the following conditions are satisfied for all
  $x, y\in G$: (i) $N(e)=0$; (ii) $N(xy^{-1})\leq N(x)+N(y)$. If $G$ is a
  topological space and $N$ is continuous, then we say that $N$ is a
  {\it continuous pre-norm} on $G$.

  Let $X$ be a non-empty Tychonoff space. Throughout this paper, $X^{-1}
  :=\{x^{-1}: x\in X\}$ and $-X: =\{-x: x\in X\}$, which are just two
  copies of $X$. For every $n\in\mathbb{N}$, $F_{n}(X)$ denotes the
  subspace of $F(X)$ that consists of all words of reduced length at
  most $n$ with respect to the free basis $X$. The subspace
  $A_{n}(X)$ is defined similarly. We always use $G(X)$ to denote
  topological groups $F(X)$ or $A(X)$, and $G_{n}(X)$ to $F_{n}(X)$ or
  $A_{n}(X)$ for each $n\in \N$. Therefore, any statement about $G(X)$
  applies to $F(X)$ and $A(X)$, and $G_{n}(X)$ applies to $F(X)$ and
  $A(X)$. Let $e$ be the neutral element of $F(X)$ (i.e., the empty
  word) and $0$ be that of $A(X)$. For every $n\in\N$ and an element
  $(x_{1}, x_{2}, \cdots, x_{n})$ of $(X\bigoplus X^{-1}\bigoplus\{e\})^{n}$
  we call $g=x_{1}x_{2}\cdots x_{n}$ a {\it form}. In the Abelian case,
  $x_{1}+x_{2}\cdots +x_{n}$ is also called a {\it form} for $(x_{1}, x_{2},
  \cdots, x_{n})\in(X\bigoplus -X\bigoplus\{0\})^{n}$. This word $g$ is
  called {\it reduced} if it does not contains $e$ or any pair of
  consecutive symbols of the form $xx^{-1}$ or $x^{-1}x$. It follows
  that if the word $g$ is reduced and non-empty, then it is different
  from the neutral element $e$ of $F(X)$. In particular, each element
  $g\in F(X)$ distinct from the neutral element can be uniquely written
  in the form $g=x_{1}^{r_{1}}x_{2}^{r_{2}}\cdots x_{n}^{r_{n}}$, where
  $n\geq 1$, $r_{i}\in\mathbb{Z}\setminus\{0\}$, $x_{i}\in X$, and
  $x_{i}\neq x_{i+1}$ for each $i=1, \cdots, n-1$, and the {\it support}
  of $g=x_{1}^{r_{1}} x_{2}^{r_{2}}\cdots x_{n}^{r_{n}}$ is defined as
  $\mbox{supp}(g) :=\{x_{1}, \cdots, x_{n}\}$. Given a subset $K$ of
  $F(X)$, we put $\mbox{supp}(K):=\bigcup_{g\in K}\mbox{supp}(g)$. Similar
  assertions (with the obvious changes for commutativity) are valid for
  $A(X)$. For every $n\in\mathbb{N}$, let $i_n: (X\bigoplus X^{-1}
  \bigoplus\{e\})^{n} \to F_n(X)$ be the natural mapping defined by
  $i_n(x_1, x_2, ... x_n)= x_1x_2...x_n$
  for each $(x_1, x_2, ... x_n) \in (X\bigoplus X^{-1} \bigoplus\{e\})^{n}$.
  We also use the same symbol in the Abelian case, that is, it means
  the natural mapping from $(X\bigoplus -X\bigoplus\{0\})^{n}$ onto
  $A_{n}(X)$. Clearly, each $i_n$ is a continuous mapping.


  \section{The $snf$-Countability of Free Topological Groups} \label{sec:snf}

  In this section, we discuss the $snf$-countability of $G(X)$ and $G_n(X)$
  for a given topological space $X$. First, we provide some general
  characterizations for the $snf$-countability of $G(X)$. Then, we give
  some particular classes of spaces $X$ for which those characterizations
  hold. Finally, we characterizes the $snf$-countability of $F_4(X)$ for a
  specific class of spaces $X$, namely $k$-spaces with a $G_\delta$-diagonal.

  The following theorem generalizes Corollary 4.14 in \cite{A1980}.

  \begin{theorem}\label{thm:freegeneral}
  For a space $X$, the following statements are equivalent:
  \begin{enumerate}
  \item[(i)] $G(X)$ is $snf$-countable.

  \item[(ii)] Each $G_{n}(X)$ contains no non-trivial convergent sequences.

  \item[(iii)] $G(X)$ contains no non-trivial convergent sequences.
  \end{enumerate}
  \end{theorem}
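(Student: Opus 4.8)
The plan is to prove the three stated conditions equivalent by verifying the four implications (iii)$\Rightarrow$(i), (iii)$\Rightarrow$(ii), (ii)$\Rightarrow$(iii) and (i)$\Rightarrow$(iii); the first two together with the last give (i)$\Leftrightarrow$(iii), and the second and third give (ii)$\Leftrightarrow$(iii). The direction (iii)$\Rightarrow$(i) is immediate from the fact recorded in Section~2 that any space without non-trivial convergent sequences is $snf$-countable, applied to the space $G(X)$. For (iii)$\Rightarrow$(ii), each $G_n(X)$ is a subspace of $G(X)$, so a non-trivial convergent sequence in $G_n(X)$ would be one in $G(X)$, which (iii) forbids. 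For (ii)$\Rightarrow$(iii) I would argue contrapositively: if $G(X)$ had a non-trivial convergent sequence $\{g_k\}$, then $\{g_k\}$ together with its limit is compact, hence contained in $G_n(X)$ for some $n\in\N$ by the standard fact that every compact subset of $F(X)$ (resp. $A(X)$) lies in some $F_n(X)$ (resp. $A_n(X)$); this yields a non-trivial convergent sequence inside $G_n(X)$, contradicting (ii).

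The heart of the argument is (i)$\Rightarrow$(iii), which I would prove by contraposition: assuming $G(X)$ contains a non-trivial convergent sequence, I will exhibit inside $G(X)$ a topological copy of the sequential fan $S_\omega$ whose apex is a point of $G(X)$. Using homogeneity of the topological group $G(X)$ we may translate so that the sequence converges to the neutral element $e$; after deleting repetitions, fix distinct $g_m\to e$ with $g_m\neq e$. For each $k\in\N$ define the $k$-th spine to be $\{g_m^{\,k}: m\in\N\}$. Since the power map $g\mapsto g^{k}$ is continuous, $g_m^{\,k}\to e$ as $m\to\infty$ for each fixed $k$, so every spine is a non-trivial sequence converging to $e$; torsion-freeness and uniqueness of roots in $F(X)$ and $A(X)$ guarantee that the spine points are distinct. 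I then claim that the subspace $Y:=\{e\}\cup\{g_m^{\,k}: k,m\in\N\}$ carries exactly the topology of $S_\omega$.

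To verify this I would show that the only convergent sequences in $Y$ are those eventually contained in finitely many spines. The key point is that any diagonal sequence $g_{m_j}^{\,k_j}$ with $k_j\to\infty$ has reduced length tending to infinity: writing a non-trivial word in cyclically reduced form shows that the reduced length of $g_{m_j}^{\,k_j}$ is at least $k_j$ in the free case, and is $k_j$ times the fixed positive reduced length of $g_{m_j}$ in the Abelian case. Such a sequence therefore leaves every $G_n(X)$, and by the same compactness fact used above it cannot converge; hence no sequence spread over infinitely many spines converges in $Y$, while each spine point is isolated in $Y$. This is precisely the fan topology, so $Y\cong S_\omega$ with apex $e$. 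Finally, $snf$-countability is inherited by subspaces at a common point—the traces $\{P\cap Y: P\in\mathscr{P}_e\}$ of a countable $sn$-network $\mathscr{P}_e$ at $e$ form a countable $sn$-network for $Y$ at $e$—so if $G(X)$ were $snf$-countable then $S_\omega$ would be $snf$-countable, contradicting the fact recorded in Section~2 that $S_\omega$ is $csf$-countable but not $snf$-countable. This completes (i)$\Rightarrow$(iii).

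The main obstacle is the verification that $Y$ is genuinely a copy of $S_\omega$ and not a finer space: everything hinges on ruling out convergence of diagonal sequences, and this is exactly where the structural fact that convergent (indeed compact) subsets of $F(X)$ and $A(X)$ are confined to some $F_n(X)$ or $A_n(X)$ is indispensable. A secondary technical point to handle with care is the distinctness and isolation of the spine points $g_m^{\,k}$, for which one uses unique roots in the non-Abelian case and torsion-freeness in the Abelian case; incidental coincidences among spine points do not affect the fan structure but must be checked.
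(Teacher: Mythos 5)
Your treatment of the easy implications matches the paper's: (iii)$\Rightarrow$(i) via the observation that a space without non-trivial convergent sequences is $snf$-countable, (iii)$\Rightarrow$(ii) by passing to subspaces, and (ii)$\Rightarrow$(iii) via the fact that a compact subset of $G(X)$ lies in some $G_n(X)$. For the main implication (i)$\Rightarrow$(iii) your strategy is in substance the same as the paper's, though packaged differently. The paper manufactures countably many pairwise disjoint non-trivial sequences converging to $e$ whose reduced lengths escape every $F_N(X)$ --- conjugates $x^{2^m n_0}x_i x^{-2^m n_0}$ in the free case and multiples $mx_i$ in the Abelian case --- takes a decreasing countable $sn$-network $\{U_n\}$ at $e$, picks $y_n\in U_n\cap L_n$, and invokes Corollary 7.4.3 of \cite{AT2008} (a set meeting each $F_m(X)$ in a finite set is closed and discrete) to get a contradiction. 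Your powers $g_m^{\,k}$ achieve the same length escape (your cyclic-reduction estimate $\ell(g^k)\ge k$ is correct, and in the Abelian case your construction coincides with the paper's), and your coincidence worry is harmless since an element of length $L$ can lie in at most $L$ of your spines.

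The one step you should not leave as stated is the inference that $Y$ \emph{is} a copy of $S_\omega$ because ``the only convergent sequences in $Y$ are those eventually contained in finitely many spines.'' A subspace topology is not determined by its convergent sequences, so this does not by itself establish the fan topology (nor even that each $g_m^{\,k}$ is isolated in $Y$). What you actually need is the stronger structural fact above: any selection of one point per spine meets each $F_N(X)$ finitely and is therefore closed and discrete in $G(X)$. With that in hand you can either (a) drop the identification with $S_\omega$ entirely and argue directly, as the paper does: choose $y_k$ in the $k$-th spine inside the $k$-th member $U_k$ of a putative countable $sn$-network at $e$ (possible since each spine converges to $e$ and $U_k$ is a sequential neighborhood), note $D=\{y_k\}$ is closed and discrete with $e\notin D$, and contradict the network property of $\{U_k\}$ applied to the open set $G(X)\setminus D$; or (b) use the same fact to show that every fan-open neighborhood of $e$ is relatively open in $Y$, which is all of the homeomorphism you actually use, since your final contradiction only concerns the local structure at $e$. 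Either repair is routine, but as written the ``this is precisely the fan topology'' sentence is a genuine logical gap.
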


  \begin{proof}
  The implications (iii) $\Rightarrow$ (ii) and (iii) $\Rightarrow$ (i)
  are obvious.

  (ii) $\Rightarrow$ (iii). Assume that each $G_{n}(X)$ contains no non-trivial
  convergent sequences. If $G(X)$ contains a non-trivial convergent sequence
  $S$, there is an $n\in \mathbb{N}$ such that $S\subseteq G_{n}(X)$. This
  implies that $G_{n}(X)$ contains a non-trivial convergent sequence,
  which is a contradiction.

  (i) $\Rightarrow$ (iii). Let $G(X)$ be $snf$-countable with a countable
  $sn$-network $\{U_{n}: n\in\mathbb{N}\}$ at $e$, where $U_{n+1}
  \subseteq U_{n}$ for each $n\in\mathbb{N}$. Assume that $G(X)$ contains
  a non-trivial convergent sequence $\{x_{i}: i\in\mathbb{N}\}$ converging
  to $e$. Without loss of generality, we assume that $x_{i}\neq e$ for
  each $i\in \mathbb{N}$. Then there exists $2\leq n_{0}\in\mathbb{N}$ such
  that $\{x_{i}: i\in\mathbb{N}\}\subseteq G_{n_{0}}(X)$. We may further
  assume that $\{x_{i}: i\in\mathbb{N}\}\subseteq G_{n_{0}}(X)\setminus
  G_{n_{0}-1}(X)$.

  We first consider the case of $F(X)$. For each $i\in \mathbb{N}$,
  let $x_{i}(1)\in X\cup X^{-1}$ be the first letter of $x_{i}$. Put
  $A:=\{i\in\mathbb{N}: x_{i}(1)\in X\}$ and $B:=\{i\in \mathbb{N}:
  x_{i}(1)\in X^{-1}\}$. Obviously, we have $A\cup B=\mathbb{N}$. Without
  loss of generality, we may assume that $|A|=\omega$. We further assume
  that $A=\N$. Take an arbitrary point $x\in X$. For each $m\in\N$,
  let $L_{m}:=\{x^{2^{m}n_0}x_ix^{-2^{m}n_0}: i\in \mathbb{N}\}$. Then
  $L_{m}\cap L_{n}=\emptyset$ for any $m\neq n$. Indeed, take arbitrary
  $m, n\in\mathbb{N}$ with $m<n$, and then pick any $g\in L_{m}$ and
  $h\in L_{n}$. Since each $x_{i}$ belongs to $G_{n_{0}}(X)\setminus
  G_{n_{0}-1}(X)$, we have
  \[
  \ell(g)> 2^{m+1}n_{0}-n_{0}\geq 2^{m}n_{0}+n_{0}\geq 2^{n+1}n_{0}+
  n_{0}\geq \ell (h).
  \]
  Hence $g\neq h,$ where $\ell(g)$ and $\ell(h)$ denote the lengths of
  $g$ and $h$, respectively. It is evident that each $\left\{x^{2^{m}n_0}
  x_ix^{-2^{m}n_0}: i\in \mathbb{N}\right\}$ converges to $e$. For each
  $n\in \mathbb{N}$, pick a point $y_{n}\in U_{n}\cap L_{n}$. Then, the
  sequence $\{y_n: n\in \mathbb{N}\}$ converges to $e$. However, since
  $|\{y_{n}: n\in \mathbb{N}\}\cap F_{m}(X)|<\omega$ for each $m\in
  \mathbb{N}$, by Corollary 7.4.3 in \cite{AT2008}, the set $\{y_{n}:
  n\in \mathbb{N}\}$ is closed and discrete in $F(X)$.
  A contradiction occurs.

  Now, we consider the case of $A(X)$. Let $L_{m}:=\{mx_{i}: i\in
  \mathbb{N}\}.$ It is easy to see that $L_{m}\cap L_{n}=\emptyset$ for
  any $m\neq n$. Obviously, each $\{mx_i: i\in \mathbb{N}\}$
  converges to $e$. We can derive a contradiction by a proof similar to
  that for the case of $F(X)$.
  \end{proof}

  Next, we consider what conditions on a space $X$ can guarantee $G(X)$
  to be $snf$-countable. In the light of Theorem \ref{thm:freegeneral},
  one may conjecture that $G(X)$ is $snf$-countable if $X$ contains no
  non-trivial convergent sequences. Unfortunately, this is not true,
  since there is a countably compact and separable $cf$-space $X$
  such that $A(X)$ and $F(X)$ contain a non-trivial convergent sequence,
  refer to Theorem 3.5 in \cite{T2013}. Nevertheless,
  by applying Theorem \ref{thm:freegeneral} to some special classes of
  topological spaces, we obtain the following result.

  \begin{theorem} \label{thm:cf_snf_countable}
  For a space $X$, $G(X)$ is $snf$-countable if one of the
  following holds:
  \begin{enumerate}
  \item[(i)] $X$ is a cf- and $\mu$-space;
  \item[(ii)] Every countable discrete subset of $X$ is $C^{\ast}$-embedded.
  \end{enumerate}
  \end{theorem}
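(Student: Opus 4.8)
The plan is to deduce the result from Theorem~\ref{thm:freegeneral}: since $G(X)$ is $snf$-countable if and only if $G(X)$ contains no non-trivial convergent sequence, it suffices to prove that, under either hypothesis, $G(X)$ has no non-trivial convergent sequence. I first record that each hypothesis already forces $X$ itself to have no non-trivial convergent sequence. Indeed, if $X$ is a $cf$-space, then a convergent sequence together with its limit would be an infinite compact subset, which is impossible; and if every countable discrete subset of $X$ is $C^{\ast}$-embedded, then a hypothetical sequence $x_n\to x$ with the $x_n\neq x$ distinct yields a countable discrete set $\{x_n:n\in\N\}$ on which the bounded function $x_n\mapsto(-1)^n$ admits no continuous extension to $X$ (since continuity would force $(-1)^n\to F(x)$), a contradiction.

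Next I set up the main argument by contradiction, mirroring the reduction already used in the proof of Theorem~\ref{thm:freegeneral}. Suppose $G(X)$ contains a non-trivial convergent sequence; after translating by a group element I may assume it converges to $e$, and after discarding finitely many terms and passing to a subsequence I obtain $\{g_i:i\in\N\}$ with $g_i\neq e$ and $g_i\in G_{n_0}(X)\setminus G_{n_0-1}(X)$ for a fixed $n_0$. Put $Y:=\mathrm{supp}\{g_i:i\in\N\}\subseteq X$. If $Y$ is finite, then $Y$, being a finite subspace of a Tychonoff space, is discrete and $C^{\ast}$-embedded, so the subgroup $\langle Y\rangle$ (algebraically $G(Y)$) embeds as a topological subgroup of $G(X)$; as $Y$ is discrete, $G(Y)$ is discrete, and since every $g_i$ lies in $\langle Y\rangle$, the sequence cannot converge non-trivially, a contradiction. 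Hence I may assume throughout that $Y$ is infinite.

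Under hypothesis (i) I invoke the standard fact that the support of a compact subset of $G(X)$ is a bounded subset of $X$ (cf.\ \cite{AT2008}). Applying this to the compact set $\{g_i:i\in\N\}\cup\{e\}$, the set $Y$ is bounded in $X$; since $X$ is a $\mu$-space, $\overline{Y}$ is compact, and since $X$ is a $cf$-space, $\overline{Y}$ is finite. Thus $Y$ is finite, contradicting the reduction of the previous paragraph. This disposes of case (i), the whole content being the passage from boundedness of supports to finiteness via the $cf$- and $\mu$-conditions.

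Under hypothesis (ii) no boundedness is available, so the plan becomes a direct construction. Writing each $g_i$ as a reduced word in its (at most $n_0$) letters and applying a pigeonhole argument over the $n_0$ letter-coordinates, together with the fact that $X$ has no non-trivial convergent sequence, I expect to extract an infinite index set $M$ and distinguished letters $a_i\in\mathrm{supp}(g_i)$ so that $D:=\{a_i:i\in M\}$ is an infinite discrete subset of $X$. By hypothesis $D$ is $C^{\ast}$-embedded, and I would use this freedom to produce a continuous function (equivalently a continuous pseudometric) on $X$ with prescribed, separating behaviour on $D$, and then Graev-extend it to a continuous pre-norm $N$ on $G(X)$; a contradiction arises once one arranges $N(g_i)\not\to 0$, since $N$ is continuous with $N(e)=0$. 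The main obstacle, and the heart of case (ii), is exactly this last point: guaranteeing that the reductions among the boundedly many letters of each $g_i$ cannot drive $N(g_i)$ to $0$. Here the uniform length bound $n_0$ is the crucial resource, as it caps the number of terms that can cancel; combined with the separation of the distinguished letters supplied by $C^{\ast}$-embedding, it should keep $N(g_i)$ bounded away from $0$. Controlling this cancellation in the Graev norm, and performing the extraction so that the distinguished letter of each retained word actually lies in the discrete $C^{\ast}$-embedded set $D$, are the delicate steps that the full proof must carry out.
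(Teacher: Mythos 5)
Your reduction via Theorem~\ref{thm:freegeneral} is the right frame, and your treatment of case (i) is essentially the paper's own argument: the paper shows directly that every compact subset $K$ of $G(X)$ is finite, by noting $K\subseteq G_n(X)$ for some $n$, that $Z:=\overline{\mathrm{supp}(K)}$ is compact (boundedness of supports of compact sets plus the $\mu$-property) and hence finite (the $cf$-property), so that $K\subseteq i_n\left((Z\oplus Z^{-1}\oplus\{e\})^n\right)$ is finite. Your version, applied to the compact set $\{g_i:i\in\N\}\cup\{e\}$, is the same computation; the detour through discreteness of the subgroup generated by the finite support is unnecessary (an injective convergent sequence cannot live in a finite set), but harmless.

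Case (ii), however, contains a genuine gap, and you flag it yourself: the entire content of that case is to produce a continuous pre-norm $N$ on $G(X)$ with $N(g_i)\not\to 0$, i.e., to control cancellation in the Graev extension of a pseudometric built from the $C^{\ast}$-embedded discrete set $D$. You describe this as the ``delicate steps that the full proof must carry out'' without carrying them out, so no proof of (ii) is actually given; the pigeonhole extraction of $D$ is also only asserted (``I expect to extract''). The paper does not perform this construction either: it simply invokes Proposition 2.4 of Tkachenko \cite{T2013}, which states precisely that $G(X)$ contains no non-trivial convergent sequences when every countable discrete subset of $X$ is $C^{\ast}$-embedded, and then applies Theorem~\ref{thm:freegeneral}. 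If you want a self-contained route close to your plan, the right intermediate tool is Proposition 2.1 of \cite{T2013}, which this paper uses in the proof of Theorem~\ref{thm:non_trivial_cs}: a non-trivial convergent sequence in $F(X)$ yields sequences $\{x_n\}$ and $\{y_n\}$ with $\{x_n\}$ infinite, $x_n\neq y_n$, and $d(x_n,y_n)\geq 1$ for at most finitely many $n$ for \emph{every} continuous pseudometric $d$; one then extracts a countable discrete subset from $\{x_n\}\cup\{y_n\}$ and uses its $C^{\ast}$-embedding to build a continuous pseudometric violating this, with no Graev-norm estimates needed. As written, your argument for (ii) is a plan, not a proof.
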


  \begin{proof}
  (i) It suffices to show that $G(X)$ is a $cf$-space. If $K\subseteq
  G(X)$ is compact, then $K\subseteq G_{n}(X)$ for some $n\in
  \mathbb{N}$. Let $Z:=\overline{\mbox{supp}(K)}$. Then $Z$ is a
  compact subset in $X$, as $X$ is a $\mu$-space. Hence $Z$ is finite.
  Since $K\subseteq i_{n}((Z\bigoplus Z^{-1}\bigoplus\{e\})^{n})$ (in
  Abelian case, $K\subseteq i_{n}((Z\bigoplus -Z\bigoplus\{0\})^{n})$),
  $K$ must be finite. Therefore, $G(X)$ is $snf$-countable.
  	
  (ii) Since $X$ is a space in which every countable discrete subset
  is $C^{\ast}$-embedded, it follows from Porposition 2.4 in
  \cite{T2013} that $G(X)$ contains no non-trivial convergent sequences.
  Therefore, by Theorem \ref{thm:freegeneral}, $G(X)$ is $snf$-countable.
  \end{proof}

  A topological space $X$ is {\it extremely disconnected} if the closure
  of every open subset is open. Since every countable discrete subset
  of an extremely disconnected space $X$ is $C^{\ast}$-embedded, by
  Theorem \ref{thm:cf_snf_countable}, $G(X)$ is $snf$-countable. As a
  particular example, the Stone-\v{C}ech compactification $\beta D$ of
  any discrete space $D$ is extremely disconnected, and hence $G(\beta D)$
  is $snf$-countable.

  If $X$ is a topological group, then we can characterize the
  $snf$-countability of $G(X)$ in terms of the property that $X$
  contains no non-trivial convergent sequences, as it is shown in
  the next result.

  \begin{theorem} \label{thm:non_trivial_cs}
  For a topological group $X$, $G(X)$ is $snf$-countable if and
  only if $X$ contains no non-trivial convergent sequences.
  \end{theorem}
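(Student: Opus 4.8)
The plan is to deduce the equivalence from Theorem~\ref{thm:freegeneral}, which already identifies $snf$-countability of $G(X)$ with the absence of non-trivial convergent sequences in $G(X)$. Thus it suffices to prove that $G(X)$ contains no non-trivial convergent sequence if and only if $X$ does. One implication is immediate and uses nothing about the group structure of $X$: since $X$ embeds as a (closed) subspace of $G(X)$ via the free basis, any non-trivial convergent sequence in $X$ is a non-trivial convergent sequence in $G(X)$; hence if $G(X)$ has none, neither does $X$. All of the work is in the converse, where the hypothesis that $X$ is a topological group must be used in an essential way --- recall that, by Theorem~3.5 in \cite{T2013}, for a general space $X$ the group $G(X)$ may acquire non-trivial convergent sequences even when $X$ has none.

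For the converse, assume that $X$ is a topological group without non-trivial convergent sequences. I would invoke Theorem~\ref{thm:cf_snf_countable}(ii): it suffices to show that \emph{every countable discrete subset $D=\{a_{n}:n\in\N\}$ of $X$ is $C^{\ast}$-embedded}, for then $G(X)$ is $snf$-countable. Because $X$ has no non-trivial convergent sequences, the distinct points $a_{n}$ admit no convergent subsequence in $X$. The strategy is to manufacture the required extensions from the rich supply of continuous pseudometrics carried by a topological group: by the Birkhoff--Kakutani construction, each neighbourhood of the identity $e_{X}$ yields a continuous left-invariant pseudometric, and by homogeneity these can be transported around each $a_{n}$. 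Since $D$ is discrete, $D$ is $C^{\ast}$-embedded as soon as any two disjoint subsets $A,B\subseteq D$ are completely separated in $X$; so the task reduces to producing, for each such pair, a continuous function $f\colon X\to[0,1]$ with $f|_{A}=0$ and $f|_{B}=1$, which I would assemble from the invariant pseudometrics above.

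The main obstacle is precisely this last lemma. The subtlety is that $D$ need not be closed in $X$: a countable discrete set in a group without convergent sequences can accumulate in the non-sequential, $\beta D$-like manner, so no single continuous pseudometric makes $D$ uniformly discrete, and the separation has to be produced across accumulation points that are not limits of any subsequence. This is where the group structure, rather than mere regularity, is indispensable. It is instructive to see where it enters through the canonical retraction: since $X$ is a topological group, $\mathrm{id}_{X}$ extends to a continuous homomorphism $\pi\colon F(X)\to X$ with $\pi|_{X}=\mathrm{id}_{X}$, and for a sequence $g_{n}\to e$ in $F(X)$ one has $\pi(g_{n})\to e_{X}$. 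If infinitely many $\pi(g_{n})$ were distinct they would already furnish a non-trivial convergent sequence in $X$; the hard case is $g_{n}\in\ker\pi$, for which a single retraction is blind and the full homogeneity of $X$ --- as encoded in the $C^{\ast}$-embedding lemma above --- is needed. Once that lemma is established, Theorem~\ref{thm:cf_snf_countable}(ii) closes the argument.
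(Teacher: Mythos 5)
There is a genuine gap, and it sits exactly where you place it. Your reduction of the theorem to the claim that \emph{every countable discrete subset of a topological group without non-trivial convergent sequences is $C^{\ast}$-embedded} is a legitimate strategy in principle (Theorem~\ref{thm:cf_snf_countable}(ii) would then finish the proof), but that claim is never proved: you describe a ``strategy'' of transporting Birkhoff--Kakutani pseudometrics around the points of $D$, then explicitly identify the completely-separated-sets lemma as ``the main obstacle,'' and stop. The obstacle is real. A single left-invariant continuous pseudometric cannot separate arbitrary disjoint pieces of a non-closed countable discrete set, and assembling a separating function from a family of such pseudometrics across accumulation points that are not sequential limits is precisely the hard analytic content you would owe the reader. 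Worse, the intermediate claim is substantially stronger than the theorem itself: Proposition~2.4 of \cite{T2013} gives only the implication from $C^{\ast}$-embedding of countable discrete sets to the absence of convergent sequences in $G(X)$, and there is no reason offered (and none known to me) why the converse should hold for every topological group. So the proposal replaces the statement to be proved by an unproved and possibly false one. Your digression about the retraction $\pi\colon F(X)\to X$ correctly isolates the hard case $g_{n}\in\ker\pi$, but again does not resolve it.

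The paper's actual argument avoids $C^{\ast}$-embedding entirely and runs through a different result of Tkachenko, namely Proposition~2.1 of \cite{T2013}: if $F(X)$ (or $A(X)$) contains a non-trivial convergent sequence, then there exist sequences $\{x_{n}\}$ and $\{y_{n}\}$ in $X$ with $\{x_{n}\}$ infinite, $x_{n}\neq y_{n}$, and with $d(x_{n},y_{n})\geq 1$ for at most finitely many $n$ for \emph{every} continuous pseudometric $d$ on $X$. The group structure of $X$ then enters in one clean step: set $S=\{x_{n}^{-1}y_{n}\}$ and show $S\to e$ in $X$, because any neighbourhood $U$ of $e$ carries a continuous pre-norm $N$ with $\{g: N(g)<1\}\subseteq U$ (Theorem~3.3.9 of \cite{AT2008}), and $d(x,y)=N(x^{-1}y)$ is a continuous pseudometric which would have to exceed $1$ infinitely often if $S$ escaped $U$ infinitely often. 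This contradicts the hypothesis that $X$ has no non-trivial convergent sequences. If you want to salvage your write-up, replace the $C^{\ast}$-embedding lemma by this use of Proposition~2.1; your treatment of the necessity direction is fine as it stands.
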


  \begin{proof}
  The necessity is obvious by Theorem \ref{thm:freegeneral}. To show the
  sufficiency, suppose that $X$ contains no non-trivial convergent sequences.
  By Theorem \ref{thm:freegeneral}, it suffices to show that $G(X)$ contains
  no non-trivial convergent sequences. We only consider the case of $F(X)$,
  since the proof for the case of $A(X)$ is quite similar. Assume that
  $F(X)$ contains a non-trivial convergent sequence. It follows from
  \cite[Proposition 2.1]{T2013} that there are two sequences $\{x_n: n\in
  \mathbb{N}\}$ and $\{y_n: n\in \mathbb{N}\}$ in $X$ such that $\{x_{n}:
  n\in\N\}$ is infinite, $x_{n}\neq y_n$ for each $n\in \N$ and for every
  continuous pseudometric $d$ on $X$, and $d(x_n, y_n)\geq 1$ for at most
  finitely many $n\in \N$. Let $S := \{x_n^{-1}y_n: n\in\N\}$. Then $S$
  is a non-trivial sequence.

  We claim that $S$ is convergent to $e$ in $X$. If not, there exists
  an open neighborhood $U$ of $e$ in $X$ and an infinite set $A\subseteq
  \mathbb{N}$ such that $x^{-1}_{n}y_{n}\not\in U$ for each $n\in A$.
  By Theorem 3.3.9 in \cite{AT2008}, there exists a continuous pre-norm
  $N$ on $X$ such that $\{g\in X: N(g)<1\}\subseteq U$.
  Define a continuous pseudometric $d$ on $X$ by $d(x, y)=N(x^{-1} y)$,
  for all $x, y\in X$. It follows that $d(x_{n}, y_{n})\geq 1$ for
  all $n\in A$. However, this is impossible, since by the construction
  of $\{x_n: n\in \mathbb{N}\}$ and $\{y_n: n\in \mathbb{N}\}$,
  $d(x_n, y_n)\geq 1$ holds only for at most finitely many $n\in A$.
  Thus, $S$ is a non-trivial sequence in $X$ converging to $e$. This
  contradicts with the fact that $X$ contains no non-trivial convergent
  sequences.
  \end{proof}

  To see how Theorem \ref{thm:non_trivial_cs} can be applied, we need
  to identify some classes of topological groups that contain no
  non-trivial convergence sequences.
  By a result of \cite{B2012}, every hereditarily normal topological
  group without a $G_{\delta}$-diagonal contains no non-trivial
  convergent sequences, and thus $G(X)$ is $snf$-countable for such
  a topological group $X$. On the other hand, every infinite compact
  group $X$ contains a non-trivial convergent sequence, as it contains
  a copy of the Cantor cube $\{0, 1\}^{w(X)}$, where
  $w(X)$ is the weight of $X$. Hence, $G(X)$ is not $snf$-countable
  for any infinite compact group $X$. Note that there are infinite
  pseudocompact topological groups containing no non-trivial convergent
  sequences, refer to \cite{Si1969} for the existence of such a
  topological group.

  Next, we discuss the $snf$-countability of subspaces $F_n(X)$ of
  $F(X)$ for a space $X$. Recall that a subspace $Y$ of a space $X$ is
  said to be {\it P-embedded in $X$} if each continuous pseudometric
  on $Y$ admits a continuous extensions over $X$.

  \begin{theorem}\label{t11}
  Let $X$ be a space with a regular $G_{\delta}$-diagonal. If $F_{4}(X)$
  is $snf$-countable, then $X$ is either a $cf$-space or compact.
  \end{theorem}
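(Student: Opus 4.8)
The plan is to argue by contraposition: assuming that $X$ is \emph{neither} a $cf$-space \emph{nor} compact, I will produce inside $F_4(X)$ a configuration witnessing the failure of $snf$-countability. Two structural facts drive the argument, and both rely on the diagonal hypothesis. First, since $X$ is not a $cf$-space it contains an infinite compact subset $K$; as $K$ inherits a $G_\delta$-diagonal, Chaber's theorem makes $K$ metrizable, so $K$ --- being infinite, compact and metrizable --- carries a non-trivial convergent sequence $\{a_n:n\in\N\}\to a$ with the $a_n$ distinct and $a_n\neq a$. Second, a countably compact space with a $G_\delta$-diagonal is compact (Chaber again); hence $X$ being non-compact forces $X$ to fail countable compactness, and so $X$ contains an infinite closed discrete subset $D=\{d_k:k\in\N\}$. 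Because $\overline{\{a_n:n\in\N\}}$ is compact and $D$ is closed discrete, $D$ meets it in a finite set, so after discarding finitely many $d_k$ I may assume $D\cap(\{a\}\cup\{a_n:n\in\N\})=\varnothing$ and that all the points involved are pairwise distinct.

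The clean equivalence of Theorem~\ref{thm:freegeneral} is unavailable here: it characterises $snf$-countability of the \emph{whole} group through the \emph{length-escaping} closed-discreteness of a diagonal sequence, whereas in $F_4(X)$ we are confined to words of length at most $4$. Instead I will exploit \emph{support-escaping}. For each $k,n\in\N$ set
\[
w(k,n):=d_k\,a_n\,a^{-1}\,d_k^{-1}\in F_4(X).
\]
Each $w(k,n)$ is a reduced word of length $4$ (no adjacent pair cancels, since $a_n\ne a$), so $w(k,n)\neq e$, and the $w(k,n)$ are pairwise distinct. As $i_4$ is continuous and $(d_k,a_n,a^{-1},d_k^{-1})\to(d_k,a,a^{-1},d_k^{-1})$ in the product when $n\to\infty$, the $k$-th \emph{spoke} $S_k:=\{w(k,n):n\in\N\}$ converges in $F_4(X)$ to $i_4(d_k,a,a^{-1},d_k^{-1})=e$. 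Thus $\{S_k:k\in\N\}$ is a countable family of non-trivial sequences all converging to the common point $e$.

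Suppose now, for contradiction, that $F_4(X)$ is $snf$-countable, and fix a countable $sn$-network $\{P_m:m\in\N\}$ at $e$ with $P_{m+1}\subseteq P_m$ (using condition (ii) in the definition of an $sn$-network to pass to a decreasing chain). Each $P_m$ is a sequential neighbourhood of $e$, so each spoke is eventually contained in $P_m$; in particular I may pick, for every $k$, a point $y_k:=w(k,n_k)\in S_k\cap P_k$ with $n_k$ as large as I wish. For fixed $m$ and all $k\geq m$ one has $y_k\in P_k\subseteq P_m$, so $\{y_k:k\in\N\}$ is eventually in every $P_m$; by condition (iii), every open neighbourhood of $e$ contains some $P_m$, whence $\{y_k:k\in\N\}$ converges to $e$.

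The crux --- and the main obstacle --- is to contradict this by showing that the diagonal sequence $\{y_k\}=\{d_k\,a_{n_k}\,a^{-1}\,d_k^{-1}\}$ is in fact \emph{closed and discrete} in $F_4(X)$, no matter how the $n_k$ are chosen. Here the closed-discreteness of $D$ must be converted into separation inside the free group, and this is genuinely delicate: a \emph{single} continuous pseudometric is useless, because the Graev extension of any continuous pseudometric $\rho$ is conjugation-insensitive and gives $\|y_k\|\le\rho(a_{n_k},a)\to0$, so no Graev ball separates the $y_k$ from $e$ (and, for the same reason, abelianising $F_4(X)$ or mapping into an Abelian group detects nothing). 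One must therefore use the full, non-metrizable neighbourhood structure of $F(X)$. The plan is to build, from the closed discrete set $D$ and the \emph{regular} $G_\delta$-diagonal of $X$, a sequence of continuous pseudometrics (equivalently, open neighbourhoods of the diagonal in $X\times X$) that simultaneously shrink along $\{a_n\}\to a$ while keeping distinct members of $D$ separated with room to spare; assembling these position-by-position into a Graev--Tkachenko type basic open neighbourhood $W$ of $e$ in $F(X)$ should then yield $y_k\notin W\cap F_4(X)$ for all large $k$, contradicting $y_k\to e$. It is exactly at this separation step that regularity of the $G_\delta$-diagonal --- closed, not merely $G_\delta$, separation of points, which is also what the $P$-embedding language isolates --- is needed, and making this neighbourhood construction rigorous, rather than the bookkeeping in the earlier reductions, is the hard part of the proof.
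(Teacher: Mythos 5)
Your reduction is sound as far as it goes --- the fan $\{w(k,n)=d_k a_n a^{-1}d_k^{-1}\}$ of spokes converging to $e$, the extraction of a diagonal sequence $\{y_k\}$ from a decreasing countable $sn$-network, and the observation that $y_k\to e$ all mirror the mechanics used elsewhere in the paper --- but the proof has a genuine gap exactly where you flag ``the hard part'': you never actually prove that $\{y_k\}$ fails to converge to $e$ (equivalently, is closed and discrete in $F_4(X)$). That step is not bookkeeping; it is the whole theorem, and your proposed route (assembling position-wise pseudometrics into a basic neighbourhood of $e$) is left entirely as a plan. Worse, the set $D$ you work with is obtained only from the failure of countable compactness, so in a non-normal Tychonoff space it need not admit a discrete open expansion, and without such an expansion there is no control over how $F(X)$ separates the conjugates $d_k(\cdot)d_k^{-1}$; Tkachenko's examples of unexpected convergent sequences in free groups over $cf$-spaces show that closed discreteness of supports alone does not force closed discreteness in $F(X)$.

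The paper closes this gap by a different and more structural route, which you should compare with your own. It first reduces to pseudocompactness: by Arhangel'ski\v\i--Burke, a pseudocompact space with a regular $G_\delta$-diagonal is compact, so if $X$ is not compact it is not pseudocompact, which yields a \emph{discrete family of open sets} $\{U_n\}$ missing the convergent sequence $C$ --- strictly more than a closed discrete set. Choosing $x_n\in U_n$, the subspace $Y=C\cup\{x_n:n\in\N\}$ is then closed, $\sigma$-compact, locally compact and $P$-embedded in $X$, so by Uspenskii's theorem $F(Y,X)$ is topologically isomorphic to $F(Y)$, which is a $k_\omega$-space. Inside a $k_\omega$-space, any cross-section of the fan $\bigcup_n x_nCx^{-1}x_n^{-1}$ meets each compact set finitely (its support contains infinitely many $x_{n_m}$) and is therefore automatically closed discrete; hence the fan together with $e$ is a closed copy of $S_\omega$ in $F_4(X)$, and $S_\omega$ is not $snf$-countable. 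In short: the separation you were trying to build by hand is obtained for free from $P$-embedding plus the $k_\omega$-property of $F(Y)$. To repair your argument, replace ``not countably compact'' by ``not pseudocompact'' (available under your hypotheses), expand $D$ to a discrete open family, and run the diagonal argument inside $F(Y)$ rather than $F(X)$.
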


  \begin{proof}
  Suppose that $X$ contains an infinite compact subset $C$. We show
  that $X$ must be compact. Since $X$ has a regular $G_{\delta}$-diagonal,
  then $C$ must be a metrizable subspace. Without loss
  of generality, we may assume that $C$ is a non-trivial convergent
  sequence with its limit point $x$. Since by a result in \cite{AB2006}
  any pseudocompact space with a regular $G_{\delta}$-diagonal is
  metrizable and compact, we only need to show that $X$ is pseudocompact.
  Assume that $X$ is not pseudocompact. Then $X$ contains an infinite
  and discrete sequence of open subsets $\{U_{n}: n\in \mathbb{N}\}$.
  Note that $\{\overline{U}_n: n\in \mathbb{N}\}$ is also discrete in
  $X$. Therefore, $\bigcup _{n\in \mathbb{N}} \overline{U}_n$ is
  closed in $X$. Since $C$ is compact, we may assume that $C\cap
  \left(\bigcup _{n\in \mathbb{N}}\overline{U}_n \right)=\emptyset$.
  It follows that $\{C\}\cup\{U_{n}: n\in \mathbb{N}\}$ is discrete
  in $X$. For each $n\in \mathbb{N}$, take a point $x_{n}\in U_{n}$.
  Then, $Y:=C\cup \{x_{n}: n\in \mathbb{N}\}$ is closed, $\sigma$-compact
  and $P$-embedded in $X$. By a result in \cite{U1991},
  the subgroup $F(Y, X)$ of $F(X)$ generated by $Y$ is topologically
  isomorphic to $F(Y)$. Obviously, $F(Y)$ is a $k_{\omega}$-space. We
  claim that $F_4(Y)$ contains a closed copy of $S_{\omega}$. For each
  $n\in \mathbb{N}$, put $C_n :=x_{n}Cx^{-1}x_{n}^{-1}$. Let $Z :=
  \bigcup_{n\in\mathbb{N}}C_n$. Then $Z \subseteq F(Y)$ is closed.
  Since $F(Y)$ is a $k_{\omega}$-space, $Z$ is also a
  $k_{\omega}$-subspace. Take an arbitrary infinite subset
  \[
  P:= \{x_{n_{m}}c_{m}x^{-1}x_{n_{m}}^{-1}: m\in\mathbb{N}, c_{m}\in
  C\setminus\{x\}\}.
  \]
  Then $P$ is a discrete closed subset of $Z$, since $Z$ is a
  $k_{\omega}$-space. Therefore, the claim is verified. It follows
  that $S_{\omega}$ must be $snf$-countable, which is a contradiction.
  \end{proof}

  The following theorem generalizes Theorem 4.9 in \cite{Y1998}.

  \begin{theorem} \label{thm:subspacefree}
  For a $k$-space $X$ with a regular $G_{\delta}$-diagonal, the
  following are equivalent:
  \begin{enumerate}
  \item[(i)] $F_{4}(X)$ is $snf$-countable.
  \item[(ii)] Each $F_{n}(X)$ is $snf$-countable.
  \item[(iii)] $F_{4}(X)$ is metrizable.
  \item[(iv)] Each $F_{n}(X)$ is metrizable.
  \item[(v)] $X$ is discrete or compact.
  \end{enumerate}
  \end{theorem}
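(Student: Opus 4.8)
The plan is to prove the five statements equivalent by exhibiting a collection of implications whose directed graph is strongly connected, so that every statement can be reached from every other. Most of the edges are routine and follow from facts already recorded in Section 2: since metrizability implies first countability implies $snf$-countability, we immediately obtain (iv) $\Rightarrow$ (ii), (iv) $\Rightarrow$ (iii) (the latter because $F_4(X)$ is one of the groups $F_n(X)$), (iii) $\Rightarrow$ (i), and (ii) $\Rightarrow$ (i). Thus the whole theorem reduces to the two substantive implications (i) $\Rightarrow$ (v) and (v) $\Rightarrow$ (iv); together with the trivial edges these make the diagram strongly connected, since from (i) one reaches (v), then (iv), and from (iv) one reaches (ii), (iii) and returns to (i).

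For (i) $\Rightarrow$ (v), I would invoke Theorem~\ref{t11}. Assuming $F_4(X)$ is $snf$-countable and that $X$ has a regular $G_\delta$-diagonal, that theorem already yields that $X$ is either a $cf$-space or compact. The only additional work is to absorb the $cf$ case into discreteness using the hypothesis that $X$ is a $k$-space: in a $T_1$ $k$-space in which every compact subspace is finite, for any subset $C$ and any compact $K$ the intersection $C\cap K$ is finite, hence closed in $K$; as $X$ is a $k$-space, $C$ is then closed in $X$. Since this holds for every $C$, the space $X$ is discrete. Therefore $X$ is discrete or compact, which is (v).

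For (v) $\Rightarrow$ (iv) I would split into the two cases of (v). If $X$ is discrete, then $F(X)$ is discrete, so each $F_n(X)$ is discrete and hence metrizable. If $X$ is compact, then $X$ is in particular pseudocompact with a regular $G_\delta$-diagonal, so by the result of \cite{AB2006} the space $X$ is compact and metrizable, and in particular second countable. Then $(X \oplus X^{-1} \oplus \{e\})^n$ is compact metrizable, hence has a countable network, and since networks are preserved by continuous surjections, its continuous image $F_n(X) = i_n((X \oplus X^{-1} \oplus \{e\})^n)$ has a countable network as well. On the other hand $F_n(X)$ is a compact Hausdorff space, being a continuous image of a compact space lying inside the Hausdorff group $F(X)$. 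A compact Hausdorff space with a countable network is second countable, hence metrizable, so each $F_n(X)$ is metrizable, which is (iv).

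The heart of the argument is concentrated in Theorem~\ref{t11}, so the remaining obstacle is chiefly one of organization: one must verify that the $cf$-or-compact dichotomy genuinely collapses to the clean dichotomy ``discrete or compact'' under the $k$-space hypothesis, and that in the compact case metrizability is obtained for \emph{all} $F_n(X)$ simultaneously rather than only for $F_4(X)$. The cosmic-image argument handles the latter uniformly in $n$, so no separate treatment of the delicate subspaces $F_4(X)$ (as required in \cite{Y1998}) is needed here; the passage from ``compact with a regular $G_\delta$-diagonal'' to ``compact metrizable'' via \cite{AB2006} is precisely what replaces Yamada's standing assumption that $X$ be metrizable.
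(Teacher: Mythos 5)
Your proposal is correct and follows essentially the same route as the paper: the same trivial implications, then (i) $\Rightarrow$ (v) via Theorem~\ref{t11} plus the observation that a $k$-space whose compact subsets are all finite is discrete, and (v) $\Rightarrow$ (iv) by metrizing $X$ from its (regular) $G_\delta$-diagonal and pushing compact metrizability through the continuous maps $i_n$. Your countable-network justification of the metrizability of the compact spaces $F_n(X)$ just makes explicit a step the paper leaves implicit.
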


  \begin{proof}
  Obviously, we have (ii) $\Rightarrow$ (i), (iv) $\Rightarrow$ (iii),
  (iv) $\Rightarrow$ (ii) and (iii) $\Rightarrow$ (i). It suffices to
  show that (i) $\Rightarrow$ (v) and (v) $\Rightarrow$ (iv).

  (i) $\Rightarrow$ (v). Since $X$ has a regular $G_{\delta}$-diagonal,
  it follows from Theorem~\ref{t11} that either each compact subset
  of $X$ is finite or $X$ is compact and metrizable. Moreover, it is
  easy to check that a $k$-space is discrete if each compact subset
  is finite. Therefore, $X$ is discrete or compact.

  (v) $\Rightarrow$ (iv). If $X$ is discrete, then $F(X)$ is discrete,
  hence each $F_{n}(X)$ is metrizable. If $X$ is compact, then $X$ is
  a compact metrizable space since a compact space with a
  $G_{\delta}$-diagonal is metrizable \cite{Gr}. Moreover, since each
  $i_{n}$ is continuous, each $F_{n}(X)$ is compact. Therefore, each
  $F_{n}(X)$ is metrizable.
  \end{proof}

  The next example shows that the condition that $X$ is ``a $k$-space"
  in Theorem \ref{thm:subspacefree} cannot be dropped.

  \begin{example}\label{e11}
  \emph{There is an infinite, non-discrete, $cf$-space $X$ with a regular
  $G_\delta$-diagonal such that $F_4(X)$ is $snf$-countable.} Let $\beta
  \N$ be the Stone-\v{C}ech compactification of $\N$ (equipped with
  the discrete topology). Take an arbitrary
  point $p\in \beta\mathbb{N} \setminus \mathbb{N}$, and consider the
  subspace $X:=\mathbb{N}\cup \{p\}$ of $\beta\N$. It is well known
  that $X$ is not compact. Indeed, $X$ is a non-discrete $cf$-space with a
  regular $G_\delta$-diagonal. However, as shown in \cite[Example 3.12]{LLL},
  $F(X)$ is $snf$-countable. Thus $F_4(X)$
  is also $snf$-countable.
  \end{example}

  By Theorem 4.12 in \cite{Y1998}, it is easy to see that ``$F_{4}(X)$'' in
  Theorem \ref{thm:subspacefree} cannot be replaced by ``$F_{3}(X)$''.
  However, we have the following result.

  \begin{theorem}\label{thm:derivedset}
  Let $X$ be a stratifiable $k$-space. If $G_{2}(X)$ is $snf$-countable,
  then $X'$ is compact.
  \end{theorem}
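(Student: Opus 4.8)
The plan is to argue by contradiction: assuming $X'$ is not compact, I would locate in $G_2(X)$ a single point, namely the neutral element $e$, that admits no countable $sn$-network, thereby contradicting $snf$-countability. First I would record the consequences of stratifiability that I need: $X$ is paracompact (hence metacompact, collectionwise normal and normal) and has a regular $G_{\delta}$-diagonal, and $X'$ is closed in $X$ (being the complement of the open set of isolated points). Since a countably compact metacompact space is compact and a compact space with a $G_{\delta}$-diagonal is metrizable \cite{Gr}, no non-compact stratifiable space can be countably compact; applying this to the (stratifiable) subspace $X'$ yields a countably infinite set $\{a_{n}:n\in\N\}$ that is closed and discrete in $X'$, and hence in $X$. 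For each $n$ the point $a_{n}$ is non-isolated, so $X\setminus\{a_{n}\}$ is not closed; as $X$ is a $k$-space there is a compact $K_{n}\ni a_{n}$ in which $a_{n}$ is non-isolated, and $K_{n}$ is metrizable (compact with a $G_{\delta}$-diagonal), so some nontrivial sequence $\{x_{n,k}:k\in\N\}\subseteq X\setminus\{a_{n}\}$ converges to $a_{n}$. Finally, using collectionwise normality I would expand $\{a_{n}\}$ to a \emph{discrete} family of open sets $\{V_{n}\}$ with $a_{n}\in V_{n}$, and discard finitely many terms so that $x_{n,k}\in V_{n}$ for all $k$; all the chosen points are taken to be distinct.

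Next I would assemble the fan of spines. Put $z_{n,k}:=x_{n,k}-a_{n}$ in the Abelian case and $z_{n,k}:=x_{n,k}a_{n}^{-1}$ in the non-Abelian case (a reduced word of length $2$, since $x_{n,k}\neq a_{n}$), and set $T_{n}:=\{z_{n,k}:k\in\N\}$. Because $x_{n,k}\to a_{n}$, continuity of the group operations gives $z_{n,k}\to e$ in $G(X)$, and hence in $G_{2}(X)$; thus each spine $T_{n}$ is a nontrivial sequence converging to $e$.

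For the diagonalization, suppose $G_{2}(X)$ were $snf$-countable and let $\{P_{j}:j\in\N\}$ be a countable $sn$-network at $e$. Each $P_{j}$ is a sequential neighbourhood of $e$, so it contains a tail of the convergent sequence $T_{j}$; fix an index $m_{j}$ with $q_{j}:=z_{j,m_{j}}\in P_{j}$. The central construction is a continuous function $\tilde h:X\to[0,1]$ with $\tilde h(x_{j,m_{j}})=1$ for every $j$ while $\tilde h=0$ at every other point of $\bigcup_{n}C_{n}$, where $C_{n}:=\{a_{n}\}\cup\{x_{n,k}:k\in\N\}$. Since $\{V_{n}\}$ is discrete, $\bigcup_{n}C_{n}$ is closed in $X$ and coincides with the topological sum $\bigoplus_{n}C_{n}$; the set $\{x_{j,m_{j}}:j\in\N\}$ is closed and discrete in it, so the function that is $1$ there and $0$ elsewhere is continuous on $\bigcup_{n}C_{n}$, and Tietze's theorem extends it to $\tilde h$. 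By the universal property of $G(X)$, $\tilde h$ induces a continuous homomorphism $\bar h:G(X)\to(\R,+)$ with $\bar h\uhr X=\tilde h$, whence $\bar h(q_{j})=\tilde h(x_{j,m_{j}})-\tilde h(a_{j})=1$ for every $j$. Now $U:=\bar h^{-1}((-1,1))\cap G_{2}(X)$ is an open neighbourhood of $e$, yet $q_{j}\in P_{j}\setminus U$ for every $j$, so no member of $\{P_{j}\}$ is contained in $U$. This contradicts the defining property (iii) of an $sn$-network at $e$. Hence $e$ has no countable $sn$-network, $G_{2}(X)$ is not $snf$-countable, and therefore $X'$ must be compact.

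The delicate point is the simultaneous control in the final step. Each spine forces $\bar h(z_{n,k})\to 0$ along $k$, so the value $1$ can be placed only at the single prescribed index $m_{j}$ of each spine; the discreteness of the family $\{V_{n}\}$ is exactly what allows these isolated ``bumps'' to coexist inside one globally continuous function, and this is precisely where stratifiability enters (via paracompactness and collectionwise normality to separate the $a_{n}$, and normality for the Tietze extension). The remaining ingredients---closedness of $X'$, the extraction of sequences from the $k$-space with a $G_{\delta}$-diagonal, and the homomorphism $\bar h$---are routine. I expect the construction of $\tilde h$ (equivalently, of a single continuous pseudometric separating one point of each $P_{j}$ from $e$) to be the main obstacle to make fully rigorous; note that this device replaces the usual identification of a closed copy of $S_{\omega}$, and gives the contradiction directly, without invoking heredity of $snf$-countability.
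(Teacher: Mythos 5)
Your proof is correct, and its second half takes a genuinely different route from the paper's. The setup coincides: both arguments note that a non-compact $X'$ cannot be countably compact (you via metacompactness, the paper via stratifiability), extract a countable closed discrete set of non-isolated points, expand it to a discrete open family using paracompactness/collectionwise normality, and attach to each point a nontrivial convergent sequence (the paper simply quotes sequentiality of $X$; you rederive it from the $k$-space property together with metrizability of compact subsets of a space with a $G_\delta$-diagonal --- both are fine). The divergence is in how the fan $C=\{e\}\cup\bigcup_n T_n$ kills $snf$-countability. The paper shows $C$ is a closed copy of $S_\omega$ by invoking Yamada's result that $i_2$ is a closed map over paracompact $X$ (so $i_2\uhr S$ is quotient), and then uses heredity of $snf$-countability plus the known fact that $S_\omega$ is not $snf$-countable. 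You instead diagonalize directly: pick one point $q_j$ in each member $P_j$ of a putative countable $sn$-network at $e$ (possible since each $P_j$ is a sequential neighbourhood and swallows a tail of $T_j$), and then separate all the $q_j$ from $e$ at once by a single continuous homomorphism $\bar h\colon G(X)\to(\R,+)$ obtained from a Tietze extension of the indicator of $\{x_{j,m_j}:j\in\N\}$, which is clopen in the closed set $\bigoplus_n C_n$. What your route buys is self-containedness and uniformity: it needs only the universal extension property of $G(X)$ and normality of $X$, avoids the closed-map/quotient machinery and the identification of the subspace topology on $C$, and treats $F$ and $A$ identically; in effect you re-prove by hand that the fan is not $snf$-countable as a subspace of $G_2(X)$, which is exactly the content the paper outsources to $S_\omega$. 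Your closing worry that the construction of $\tilde h$ is the main obstacle is unfounded --- your own argument (discreteness of $\{V_n\}$ makes $\bigcup_n C_n$ closed and $\{x_{j,m_j}\}$ clopen in it, then Tietze) already settles it.
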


  \begin{proof}
  We only consider the case of $A_{2}(X)$, as the proof of the $F_{2}(X)$
  case is quite similar.

  Suppose that $X'$ is not compact. Then, $X'$ is not countably compact
  since $X$ is stratifiable. Therefore, there is a closed, countable,
  infinite and discrete subset $\{x_n: n\in\mathbb{N}\}$ in $X'$. Since
  $X$ is paracompact, we can choose a discrete family $\{U_n: n\in\N\}$ of
  open subsets in $X$ such that $x_n \in U_n$ for each $n\in\mathbb{N}$.
  For each $n\in\mathbb{N}$, since $X$ is sequential and $\{x_n: n\in
  \mathbb{N}\}\subseteq X'$, we can choose a non-trivial sequence $\{x_n(m):
  m\in \mathbb{N}\}$ such that $\{x_n(m): m\in \mathbb{N}\}$ converges to
  $x_{n}$ and $\{x_n(m): m\in \mathbb{N}\}\subseteq U_n$. Let $Y_n:=\{x_n(m):
  m\in \N\} \cup \{x_n\}$, $C_n :=\{x_n(m)-x_n: m\in \N\}$ and
  $C :=\bigcup\{C_n: n\in\N\}\cup\{0\}$. Obviously, each sequence
  $\{x_n(m)-x_n: m \in \N\}$ converges to 0 in $A_{2}(X)$ and $\{Y_n:
  n\in\mathbb{N}\}$ is a discrete family in $X$.

  We claim that the subspace $C$ is a copy of $S_{\omega}$. Indeed,
  the subspace $S$, defined by
  \[
  S:=\{(x_{n}(m), -x_{n}): m, n\in\mathbb{N}\}\cup\{(x_{n}, -x_{n}): n\in
  \mathbb{N}\},
  \]
  is closed in $(X\cup -X)\times(X\cup -X)$. Since $X$ is paracompact,
  it follows from Proposition 4.8 in \cite{Y1998} that $i_2$ is a closed
  map. Then $i_2 \uhr S$ is a quotient mapping. Therefore, $C\subseteq
  A_{2}(X)$ is homeomorphic to $S_{\omega}$, and this verifies the claim.
  Since $A_{2}(X)$ is $snf$-countable, then $C$ is $snf$-countable. This
  contradicts with the fact that $S_{\omega}$ is not $snf$-countable.
  \end{proof}

  Note that in general, the converse of Theorem \ref{thm:derivedset}
  does not hold. To see this, consider the space $S_{\omega}$. It is
  easy to check that $S_{\omega}$ is a stratifiable $k$-space whose
  set of non-isolated points is compact. However, neither $S_{\omega}$
  nor $G_2(S_{\omega})$ is $snf$-countable.


  \section{The $csf$-Countability of Free Topological Groups}

  In this section, we discuss the $csf$-countability of $G(X)$ and
  $G_n(X)$ for a given space $X$. First of all, we have the following
  simple observation.

  \begin{proposition} \label{prop:csf}
  For a space $X$, $G(X)$ is $csf$-countable iff $G_n(X)$ is
  $csf$-countable for all $n \in \mathbb N$.
  \end{proposition}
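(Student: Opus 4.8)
The plan is to prove both directions by a direct argument, since the statement is a biconditional about a fixed base point in a group. The key structural fact I would exploit is that $G(X) = \bigcup_{n\in\mathbb{N}} G_n(X)$ with $G_n(X)$ closed in $G(X)$, and crucially that each $G_n(X)$ carries the subspace topology inherited from $G(X)$. Because $csf$-countability is a pointwise property (a space is $csf$-countable iff it has a countable $cs$-network at each point), it suffices to compare $cs$-networks at an arbitrary point $g$, viewed both in $G(X)$ and in whichever $G_n(X)$ contains it.

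The forward direction should be the routine one. Suppose $G(X)$ is $csf$-countable; fix $n$ and a point $g \in G_n(X)$. Take a countable $cs$-network $\mathscr{P}$ at $g$ in $G(X)$, and I would set $\mathscr{P}' := \{P \cap G_n(X) : P \in \mathscr{P}\}$. The claim is that $\mathscr{P}'$ is a countable $cs$-network at $g$ in $G_n(X)$. To verify this, take a sequence in $G_n(X)$ converging to $g$ together with an open neighbourhood $U$ of $g$ in $G_n(X)$; write $U = V \cap G_n(X)$ for some open $V$ in $G(X)$. The convergence persists in $G(X)$, so some $P \in \mathscr{P}$ and tail satisfy $\{g\} \cup \{x_i : i \geq m\} \subseteq P \subseteq V$, and intersecting with $G_n(X)$ gives the required $P \cap G_n(X) \in \mathscr{P}'$ trapped between the tail and $U$. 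So each $G_n(X)$ is $csf$-countable.

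The converse is the step I expect to require the most care. Assume every $G_n(X)$ is $csf$-countable, fix $g \in G(X)$, and choose $n_0$ with $g \in G_{n_0}(X)$. The natural candidate network at $g$ in $G(X)$ is $\mathscr{Q} := \bigcup_{n \geq n_0} \mathscr{P}^{(n)}$, where $\mathscr{P}^{(n)}$ is a countable $cs$-network at $g$ in $G_n(X)$; this is a countable union of countable families, hence countable. The main obstacle is that a sequence converging to $g$ in $G(X)$ need not lie inside any single $G_n(X)$ a priori, so I cannot immediately apply one of the $\mathscr{P}^{(n)}$. Here I would invoke the key absorption fact, already used in Theorem~\ref{thm:freegeneral} via Corollary 7.4.3 in \cite{AT2008}: every convergent sequence in $G(X)$ is contained in some $G_n(X)$. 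Given a sequence $\{x_i\}$ converging to $g$ and an open neighbourhood $U$ of $g$ in $G(X)$, pick $n \geq n_0$ with $\{g\}\cup\{x_i : i\in\mathbb{N}\} \subseteq G_n(X)$; then $U \cap G_n(X)$ is an open neighbourhood of $g$ in $G_n(X)$, the sequence converges to $g$ in $G_n(X)$, and the $cs$-network property of $\mathscr{P}^{(n)}$ supplies some $P$ with $\{g\}\cup\{x_i : i \geq m\} \subseteq P \subseteq U \cap G_n(X) \subseteq U$. Since $P \in \mathscr{Q}$, this shows $\mathscr{Q}$ is a countable $cs$-network at $g$, completing the proof. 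The only thing to double-check is that the sequence-absorption result applies to both $F(X)$ and $A(X)$ uniformly, which is exactly the setting in which $G(X)$ and $G_n(X)$ were defined.
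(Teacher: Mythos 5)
Your proof is correct and follows essentially the same route as the paper's: restricting a network to $G_n(X)$ for the forward direction, and taking the union of the level-$n$ networks together with the fact that every convergent sequence in $G(X)$ is absorbed into some $G_n(X)$ for the converse. The paper is terser---it builds the network only at $e$ (relying on homogeneity of the topological group $G(X)$) and leaves the verification as ``easy to check''---but the substance is identical.
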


  \begin{proof}
  It is clear that if $G(X)$ is $csf$-countable, then each $G_n(X)$
  is $csf$-countable.

  Conversely, if each $G_n(X)$ is $csf$-countable with a $cs$-network
  $\mathscr{P}_n =\{P_{n, i}: i\in \N\}$ at $e$, then it is easy to
  check that $\mathscr{P}=\bigcup_{n\in\N} \mathscr{P}_n$ is a
  $cs$-network for $G(X)$ at $e$. Hence, if $G_n(X)$ is
  $csf$-countable for all $n \in \mathbb N$, then $G(X)$ is
  $csf$-countable.
  \end{proof}

  In the light of Proposition \ref{prop:csf}, one of our purposes in
  this section is to identify those classes of spaces $X$ for which
  the $csf$-countability of $G_n(X)$ at certain level $n\in \N$ will
  be adequate to guarantee the $csf$-countability of $G(X)$.

  \begin{theorem}\label{thm:level2_csf}
  For a paracompact, crowded, $k$- and $\aleph$-space $X$, the
  following are equivalent:
  \begin{enumerate}
  \item[(i)] $G(X)$ is $csf$-countable.
  \item[(ii)] $G_{2}(X)$ is $csf$-countable.
  \item[(iii)] $X$ is separable.
  \end{enumerate}
  \end{theorem}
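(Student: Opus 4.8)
The plan is to establish the cycle (iii) $\Rightarrow$ (i) $\Rightarrow$ (ii) $\Rightarrow$ (iii), since (i) $\Rightarrow$ (ii) is immediate from Proposition \ref{prop:csf} (the $csf$-countability of $G(X)$ passes to every subspace $G_n(X)$). For (iii) $\Rightarrow$ (i), I would use the hypothesis that $X$ is a separable $\aleph$-space. A separable $\aleph$-space has a \emph{countable} $k$-network, i.e. it is an $\aleph_0$-space. The class of $\aleph_0$-spaces is known to be preserved by the free topological group construction: if $X$ is an $\aleph_0$-space, then $F(X)$ and $A(X)$ are $\aleph_0$-spaces as well (this is a classical result in the spirit of the cited literature, following from the fact that $\aleph_0$-spaces are exactly the continuous images of separable metric spaces under certain maps, and that $G(X)$ inherits a countable $k$-network). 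Since every $\aleph_0$-space has a countable $k$-network, and a countable $k$-network is in particular a countable $cs$-network at each point, it follows that $G(X)$ is $csf$-countable. So the easy half reduces to a preservation result for $\aleph_0$-spaces.

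The substantive direction is (ii) $\Rightarrow$ (iii), where I must show that $csf$-countability of $G_2(X)$ forces $X$ to be separable. The natural strategy is contrapositive: assume $X$ is \emph{not} separable and build, inside $G_2(X)$, a convergent sequence together with an open set witnessing that no countable family of sets can serve as a $cs$-network at the limit point. Since $X$ is a non-separable, paracompact, crowded $\aleph$-space, it has uncountable "spread" in a suitable sense; using paracompactness I would extract an uncountable discrete (or at least suitably separated) family of points $\{x_\alpha : \alpha \in \omega_1\}$, and since $X$ is crowded each $x_\alpha$ is a limit of a nontrivial sequence. The goal is to embed a fan-like configuration $S_{\omega_1}$ (an uncountable fan) or a similar obstruction into $G_2(X)$ so that $csf$-countability fails at the relevant point. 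The key technical input will be that for paracompact $X$ the natural map $i_2$ is a closed (hence quotient) map onto $A_2(X)$ — precisely the tool invoked in the proof of Theorem \ref{thm:derivedset} via Proposition 4.8 in \cite{Y1998}; this lets me transfer a closed configuration from $(X \oplus -X)^2$ (or the free analogue $(X \oplus X^{-1} \oplus \{e\})^2$) down into $G_2(X)$ as a quotient image.

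Concretely, for the Abelian case I would take the words $x_\alpha(m) - x_\alpha$ as in the proof of Theorem \ref{thm:derivedset}, but now indexed by an uncountable set of non-isolated points arising from non-separability, so that the resulting subspace of $A_2(X)$ contains a closed copy of the uncountable fan $S_{\omega_1}$. An uncountable fan is not $csf$-countable: at its vertex any $cs$-network must contain, for each of the uncountably many spokes, a distinct member capturing a tail of that spoke, forcing the local $cs$-network to be uncountable. This contradicts the assumed $csf$-countability of $G_2(X)$, completing (ii) $\Rightarrow$ (iii). The free (non-Abelian) case would be handled by the analogous conjugation words $x_\alpha(m)\, x_\alpha^{-1}$ as in Theorem \ref{t11}.

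The main obstacle I anticipate is the passage from non-separability to a genuinely uncountable \emph{discrete} family of non-isolated points whose associated sequences remain separated after applying $i_2$. Mere non-separability gives an uncountable set with no countable dense subset, but converting this into a discrete family (so that the fan's spokes do not interfere) requires the paracompactness and $\aleph$-space hypotheses in an essential way — I would need the $\sigma$-locally finite $k$-network to control the interaction of the uncountably many convergent sequences and to guarantee that the fan injects as a \emph{closed} subspace, so that $csf$-countability of the ambient $G_2(X)$ descends to it. Verifying that $i_2$ sends the chosen closed set in the square to a closed copy of $S_{\omega_1}$ — rather than collapsing distinct spokes or creating spurious convergence — is where the bulk of the care will be needed.
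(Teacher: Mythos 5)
Your proposal follows essentially the same route as the paper: (iii) $\Rightarrow$ (i) via the fact that a separable $\aleph$-space is an $\aleph_0$-space together with the preservation theorem of Arhangel'ski\v\i--Okunev--Pestov, and (ii) $\Rightarrow$ (iii) by extracting an uncountable closed discrete set from non-separability of the $\aleph$-space, using paracompactness and crowdedness to build the configuration $\{x_n(\alpha)-x_\alpha\}$, and pushing it through the closed map $i_2$ to obtain a closed copy of $S_{\omega_1}$ in $G_2(X)$, which is not $csf$-countable. The obstacle you flag at the end is handled in the paper exactly as you anticipate, by taking the uncountable discrete set closed and separating it with a discrete family of open sets.
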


  \begin{proof}
  Since (i) $\Rightarrow$ (ii) is trivial, we only need to show (ii)
  $\Rightarrow$ (iii) and (iii) $\Rightarrow$ (i). Further, we only
  consider $A(X)$, as the proof of the case of $F(X)$ is quite similar.

  (ii) $\Rightarrow$ (iii). Suppose $X$ is not separable. Since $X$ is
  an $\aleph$-space, there is a closed, uncountable and discrete subset
  $\{x_\alpha: \alpha<\omega_1\}$ in $X$. Since $X$ is paracompact, we
  can choose a discrete family of open subsets $\{U_\alpha: \alpha<\omega_1\}$
  in $X$ such that $x_\alpha\in U_\alpha$ for each $\alpha<\omega_1$.
  For each $\alpha<\omega_1$, since $X$ is sequential, we can choose
  a non-trivial sequence $\{x_n(\alpha): n\in \mathbb{N}\}\subseteq
  U_\alpha$, convergent to $x_\alpha$. Put
  \[
  Y_\alpha:=\{x_n(\alpha): n\in \mathbb{N}\}\cup \{x_\alpha\},\  C_\alpha:
  =\{x_n(\alpha)-x_\alpha: n\in \mathbb{N}\}\
  \]
  and let $C:=\bigcup\{C_\alpha: \alpha<\omega_1\}\cup\{0\}$. Obviously,
  $\{x_n(\alpha)-x_\alpha: n\in \mathbb{N}\}$ is convergent to 0 in
  $A_{2}(X)$ and $\{Y_\alpha: \alpha<\omega_1\}$ is a discrete family in
  $X$. We claimed that $C$ is a copy of $S_{\omega_1}$. Indeed, let
  \[
  S:=\{(x_{n}(\alpha), -x_{\alpha}): n\in\mathbb{N}, \alpha<\omega_1\}
  \cup\{(x_{\alpha}, -x_{\alpha}): \alpha<\omega_1\}.
  \]
  Then $S$ is closed in $(X\cup -X)\times (X\cup -X)$. Since $X$ is
  paracompact, by Proposition 4.8 in \cite{Y1998}, $i_2$ is a closed map.
  It follows that $i_2\uhr S$ is a quotient mapping, and thus $C\subseteq
  A_{2}(X)$ is homeomorphic to $S_{\omega_1}$. Since $A_{2}(X)$ is
  $csf$-countable, then $C$ is $csf$-countable. However, $S_{\omega_1}$
  is not $csf$-countable, which is a contradiction.

  (iii) $\Rightarrow$ (i). Let $X$ be separable. Then $X$ is an $\aleph_0$-space.
  By Theorem 4.1 in \cite{AOP1989}, $A(X)$ is an $\aleph_0$-space. Thus,
  $A(X)$ is $csf$-countable.
  \end{proof}

  The following theorem provides an affirmative answer to Question 3.9 in
  \cite{LLL}. Recall that a topological space $X$ is said to be
  \emph{La\v{s}nev} if it is the closed image of some metric space.

  \begin{theorem}\label{thm:level4_csf}
  Let $X$ be a non-discrete La\v{s}nev space. Then $F_{4}(X)$ is
  $csf$-countable if and only if $F(X)$ is an $\aleph_0$-space.
  \end{theorem}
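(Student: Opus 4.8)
The plan is to prove the two implications separately, with the reverse implication being routine and the forward one carrying all the content. If $F(X)$ is an $\aleph_0$-space, then $F(X)$ has a countable $k$-network, hence a countable $cs$-network at each point, so $F(X)$ is $csf$-countable; since $csf$-countability clearly passes to subspaces, $F_4(X)$ is $csf$-countable as well. This disposes of the ``if'' direction without using the La\v{s}nev hypothesis.

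For the converse I would first reduce the assertion ``$F(X)$ is an $\aleph_0$-space'' to a condition on $X$ alone. Recall that $F(X)$ is an $\aleph_0$-space if and only if $X$ is an $\aleph_0$-space (one direction because $X$ embeds as a closed subspace of $F(X)$, the other by Theorem~4.1 in \cite{AOP1989}). Moreover, every La\v{s}nev space is a Fr\'echet--Urysohn $\aleph$-space, and an $\aleph$-space is an $\aleph_0$-space precisely when it is separable, i.e.\ precisely when it has no uncountable closed discrete subspace. Thus it suffices to prove: if $X$ is a non-discrete La\v{s}nev space that is \emph{not} separable, then $F_4(X)$ is not $csf$-countable. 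I would establish this by exhibiting a closed copy of $S_{\omega_1}$ inside $F_4(X)$; since $S_{\omega_1}$ is not $csf$-countable and $csf$-countability is hereditary, its presence contradicts the $csf$-countability of $F_4(X)$.

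The construction combines the two features now available. Non-separability together with the $\aleph$-space structure yields an uncountable closed discrete subset $\{x_\alpha:\alpha<\omega_1\}$ of $X$, and paracompactness of $X$ provides a discrete family $\{U_\alpha:\alpha<\omega_1\}$ of open sets with $x_\alpha\in U_\alpha$. Since $X$ is non-discrete and Fr\'echet--Urysohn, there is a non-trivial sequence $\{a_n:n\in\N\}$ converging to some $a\in X$; after discarding countably many indices we may assume $a$ and all $a_n$ lie outside $\bigcup_{\alpha}\overline{U}_\alpha$ and that $a_n\neq a$ for all $n$. For each $\alpha<\omega_1$ put
\[
g_n^{\alpha}:=x_\alpha\,a_n\,a^{-1}\,x_\alpha^{-1}\in F_4(X).
\]
Each $g_n^{\alpha}$ is a reduced word of length $4$ because the four letters have pairwise distinct underlying points, and for fixed $\alpha$ one has $g_n^{\alpha}\to x_\alpha a a^{-1}x_\alpha^{-1}=e$ as $n\to\infty$; thus for every $\alpha$ the set $\{g_n^{\alpha}:n\in\N\}$ is a non-trivial sequence converging to $e$. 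Because $x_\alpha\in\mathrm{supp}(g_n^{\alpha})$ and the $U_\alpha$ form a discrete family, the sequences indexed by distinct $\alpha$ are spread apart, so that $C:=\{e\}\cup\bigcup_{\alpha<\omega_1}\{g_n^{\alpha}:n\in\N\}$ should carry the fan topology of $S_{\omega_1}$. This is precisely where reduced length $4$ is needed: the two copies of $x_\alpha^{\pm1}$ serve only to separate the $\omega_1$ spines, while $a_na^{-1}$ produces the convergence to $e$, so the analogous construction cannot be run inside $F_2(X)$.

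Making the last sentence precise is the technical heart of the proof. I would set
\[
S:=\{(x_\alpha,a_n,a^{-1},x_\alpha^{-1}):\alpha<\omega_1,\ n\in\N\}\cup\{(x_\alpha,a,a^{-1},x_\alpha^{-1}):\alpha<\omega_1\},
\]
verify that $S$ is closed in $(X\bigoplus X^{-1}\bigoplus\{e\})^{4}$, and argue that $i_4\uhr S$ is a quotient (indeed closed) map whose image is $C$; identifying its fibres then shows $C$ is homeomorphic to $S_{\omega_1}$. I expect the main obstacle to be establishing the closedness of $i_4\uhr S$, since the closed-map lemma used at level two (Proposition~4.8 of \cite{Y1998}) is stated for $i_2$ and for metrizable spaces; the plan is to recover it here from the paracompactness of the La\v{s}nev space $X$ and the discreteness of $\{U_\alpha\}$, treating the finitely many reduction and cancellation patterns of length-$4$ words by hand. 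Once $C\cong S_{\omega_1}$ is realized as a closed subspace of $F_4(X)$, the fact that $S_{\omega_1}$ is not $csf$-countable completes the contradiction, and hence the proof.
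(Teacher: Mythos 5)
Your reduction contains a fixable slip and one genuine gap. The slip: it is not true that every La\v{s}nev space is an $\aleph$-space ($S_{\omega_1}$ itself is La\v{s}nev --- a closed image of a topological sum of convergent sequences --- but is not $csf$-countable and hence not an $\aleph$-space). The correct route, and the one the paper takes, is to observe first that $X$ embeds as a closed subspace of $F_4(X)$, so $X$ is $csf$-countable and therefore contains no closed copy of $S_{\omega_1}$; only then does the Junnila--Yun theorem \cite{JY} upgrade the $\sigma$-hereditarily closure-preserving $k$-network of a La\v{s}nev space to a $\sigma$-locally finite one. This can be patched, and the rest of your reduction (separable paracompact $\aleph$-space $\Rightarrow$ $\aleph_0$-space $\Rightarrow$ $F(X)$ is $\aleph_0$ by \cite{AOP1989}) is fine.

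The genuine gap is the claim that $C=\{e\}\cup\bigcup_{\alpha<\omega_1}\{g_n^{\alpha}:n\in\N\}$ carries the topology of $S_{\omega_1}$. What comes for free is only that the subspace topology on $C$ is \emph{coarser} than the fan topology (each spine converges to $e$, so the natural bijection $S_{\omega_1}\to C$ is continuous); but a strictly coarser topology at $e$ could perfectly well be $csf$-countable, so your contradiction requires the two topologies to coincide, i.e., that for every choice of cotails of the spines there is a neighborhood of $e$ in $F_4(X)$ meeting $C$ only in those cotails. You propose to obtain this from $i_4\uhr S$ being a closed (quotient) map, by analogy with Proposition 4.8 of \cite{Y1998}; but that proposition is specific to $i_2$, and for $n\geq 3$ the maps $i_n$ fail to be closed for general non-discrete, non-compact metrizable $X$ --- this is exactly the delicate point about the neighborhood filter of $e$ in $F_4(X)$, and you leave it as ``the main obstacle'' without an argument. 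The paper's proof is designed precisely to avoid this: it never shows the full $\omega_1$-fan is a copy of $S_{\omega_1}$. Instead it takes the hypothesized countable $cs$-network $\{P_n\}$ at $e$, separates the indices $n$ according to whether $P_n$ meets countably or uncountably many spines, diagonally selects countably many spines $C_{\alpha_n}$ with $P_n\cap C_{\alpha_n}\neq\emptyset$, proves that this \emph{countable} subfan is a copy of $S_\omega$ (which only requires the sequential $k_\omega$-structure of $F(Z)$ for the countable, locally compact, closed support $Z$), and then derives a contradiction because the chosen points $a_n\in P_n\cap C_{\alpha_n}$ would have to cluster at $e$ while forming a closed discrete subset of $S_\omega$. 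Unless you can actually prove that $i_4\uhr S$ is quotient onto its image, your argument does not close; I would replace the last two paragraphs of your proposal with this counting argument.
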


  \begin{proof}
  The sufficiency is obvious. To show the necessity, let $F_{4}(X)$
  be $csf$-countable. Then $X$ contains no copy of $S_{\omega_{1}}$, and
  hence $X$ is an $\aleph$-space. By Theorem 4.1 in \cite{AOP1989}, it suffices
  to show that $X$ is an $\aleph_0$-space. Since $X$ is an $\aleph$-space,
  the proof will be completed if we can show that each closed and discrete
  subset of $X$ is at most countable.
  Assume that $X$ contains an uncountable, closed and discrete subset
  $D=\{d_{\alpha}: \alpha<\omega_{1}\}$. Since $X$ is a non-discrete
  La\v{s}nev space, there exists a non-trivial convergent sequence $\{x_n: n
  \in\mathbb{N}\}$ with the limit point $x$ in $X$ such that $D\bigoplus S$ is
  a closed copy of $X$, where $S:=\{x_{n}: n\in\mathbb{N}\}\cup\{x\}$. Since
  $F_{4}(D\bigoplus S)$ is a closed subspace of $F_{4}(X)$, $F_{4}(D\bigoplus
  S)$ is $csf$-countable.

  Next, we shall show that $F_{4}(D\bigoplus S)$ contains a copy of $S_\omega$.
  To this end, for each $\alpha < \omega_1$, define $C_\alpha :=\{ d_\alpha x_n
  x^{-1}{d_\alpha}^{-1}: n\in\mathbb{N}\}$. Then $C_{\alpha}$ converges to $e$.
  Let $C :=\{e\}\cup \left(\bigcup_{\alpha<\omega_1} C_\alpha\right)$. Obviously,
  $C\subseteq F_{4}(D\bigoplus S)$, which implies that $C$ is $csf$-countable.
  Then $C$ has a countable $cs$-network $\{P_{n}: n\in\mathbb{N}\}$ at $e$.
  Put
  \[
  N_{1}:=\{n\in\mathbb{N}: |\{\alpha<\omega_{1}: P_{n}\cap C_{\alpha}
  \neq\emptyset\}|\leq\aleph_{0}\}
  \]
  and
  \[
  B:=\{\alpha<\omega_{1}: C_{\alpha}\cap P_{_{n}}\neq\emptyset\ \mbox{for
  some}\ n\in N_{1}\}.
  \]
  Clearly, $\mathbb{N}\setminus N_{1}$ is a countable infinite set and $B$
  is a countable set. It is easy to see that $\{P_{n}: n\in\mathbb{N}
  \setminus N_{1}\}$ is a countable $cs$-network at $e$ in
  \[
  D_{1}:=\{e\}\cup\{d_{\alpha}x_{n}x^{-1}d_{\alpha}^{-1}: \alpha\in\omega_1
  \setminus B, n\in\mathbb{N}\}.
  \]
  Moreover, for each $n\in\mathbb{N}\setminus N_{1}$, the set $P_{n}$
  intersects uncountably many $C_{\alpha}$. Inductively, we can find a
  countable infinite subset $R:=\{\alpha_{n}\in\omega_{1} \setminus B:
  n\in\mathbb{N}\setminus N_{1}\}$ of $\omega_{1}$ such that
  $\alpha_{n}\neq \alpha_{m}$ if $n\neq m$ and $P_{n}\cap C_{\alpha_{n}}
  \neq\emptyset$ for each $n\in\mathbb{N} \setminus N_{1}$. Let $Y:=\{e\}
  \cup\left(\bigcup_{\alpha \in R} C_\alpha\right)$. It is clear that
  $Y \subseteq F_{4}(D\bigoplus S)$. We claim that $Y$ is homeomorphic to
  $S_{\omega}$. Indeed, let $Z=\mbox{supp}(Y)$. Then, $Z$ is a countable,
  infinite, locally compact and closed subspace in $D\bigoplus S$.
  Hence, $F(Z)$ is a sequential space by Theorem 7.6.36 in \cite{AT2008}.
  Moreover, $F(Z)$ is a subspace of $F(D\bigoplus S)$, since
  $D\bigoplus S$ is metrizable and $Z$ is closed in $D\bigoplus S$.
  Assume that $Y$ is not a copy of $S_{\omega}$. Then there is a
  non-trivial convergent sequence $\{d_{\alpha_{n_{k}}} x_{m_{k}}x^{-1}
  d_{\alpha_{n_{k}}}^{-1}: k\in\N\}$ in $Y$. Since $Z$ is paracompact,
  the closure of $\mbox{supp}\{d_{\alpha_{n_{k}}} x_{m_{k}}x^{-1}
  d_{\alpha_{n_{k}}}^{-1}: k\in\mathbb{N}\}$ in $Z$ is compact. However,
  $\{d_{\alpha_{n_{k}}}: k\in\mathbb{N}\}$ is an infinite, closed and
  discrete subspace in $Z$, which is a contradiction. Therefore, the
  claim is verified.

  Finally, pick an arbitrary point $a_{n} \in P_{n}\cap C_{\alpha_{n}}$
  for each $n\in\mathbb{N}\setminus N_{1}$. Since the family $\{P_{n}
  \cap Y: n\in \mathbb{N}\setminus N_{1}\}$ is a countable $cs$-network
  at $e$ in $Y$, $e$ is a cluster point of the set $\{a_{n}: n \in
  \mathbb{N}\setminus N_{1}\}$, which is a contradiction. Therefore, each
  closed and discrete subset of $X$ is at most countable.
  \end{proof}

  \begin{corollary}\label{cor:level4_csf}
  For a non-discrete metrizable space $X$, the following are equivalent:
  \begin{enumerate}
  \item[(i)] $X$ is separable.
  \item[(ii)] $F(X)$ is an $\aleph_0$-space.
  \item[(iii)] $F(X)$ is $csf$-countable.
  \item[(iv)] $F_{4}(X)$ is $csf$-countable.
  \end{enumerate}
  \end{corollary}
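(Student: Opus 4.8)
The plan is to recognize that the bulk of this corollary is an immediate specialization of Theorem~\ref{thm:level4_csf}, together with standard facts about $\aleph_0$-spaces, so the proof is mostly organizational. First I would observe that every metrizable space $X$ is a La\v{s}nev space, since $X$ is trivially the closed continuous image of itself under the identity map. Consequently, if $X$ is non-discrete and metrizable, then it is a non-discrete La\v{s}nev space, and Theorem~\ref{thm:level4_csf} applies verbatim to give the equivalence of (ii) and (iv): $F_4(X)$ is $csf$-countable if and only if $F(X)$ is an $\aleph_0$-space. This single observation supplies the only substantial content of the corollary.

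It then remains to thread the remaining conditions into a cycle. For (i)$\Rightarrow$(ii), I would note that a separable metrizable space is second countable, hence an $\aleph_0$-space, and invoke Theorem 4.1 in~\cite{AOP1989} to conclude that $F(X)$ is an $\aleph_0$-space. For (ii)$\Rightarrow$(iii), I would use the standard implication that every $\aleph_0$-space is $csf$-countable (a countable $k$-network yields a countable $cs$-network at each point). For (iii)$\Rightarrow$(iv), $csf$-countability passes to subspaces, or one may simply appeal to Proposition~\ref{prop:csf}, which gives that $F(X)$ being $csf$-countable forces every $F_n(X)$, and in particular $F_4(X)$, to be $csf$-countable. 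Combined with (iv)$\Rightarrow$(ii) from the previous paragraph, this closes the loop among (ii), (iii), and (iv).

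Finally, to return to (i) I would prove (ii)$\Rightarrow$(i). An $\aleph_0$-space has a countable network, and networks restrict to subspaces; since $X$ embeds as a closed subspace of $F(X)$, the space $X$ inherits a countable network. A metrizable space with a countable network is separable, so $X$ is separable, yielding (i). The proof carries essentially no risk: the only genuine theorem in play, Theorem~\ref{thm:level4_csf}, is already established, and every other step is a textbook property of $\aleph_0$-spaces and networks. If there is any subtlety at all, it is in being explicit that metrizable spaces fall under the La\v{s}nev hypothesis of Theorem~\ref{thm:level4_csf}; once that point is recorded, the remainder is routine bookkeeping.
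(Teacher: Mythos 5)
Your proof is correct and follows essentially the same route as the paper's: the equivalence of (i) and (ii) via Theorem 4.1 of \cite{AOP1989}, the obvious implications (ii) $\Rightarrow$ (iii) $\Rightarrow$ (iv), and (iv) $\Rightarrow$ (ii) from Theorem~\ref{thm:level4_csf} applied to $X$ as a La\v{s}nev space. You merely spell out a few steps the paper leaves implicit (metrizable implies La\v{s}nev, and the network argument for (ii) $\Rightarrow$ (i)).
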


  \begin{proof}
  By Theorem 4.1 in \cite{AOP1989}, we have (i) $\Leftrightarrow$ (ii).
  Moreover, (ii) $\Rightarrow$ (iii) and (iii) $\Rightarrow$ (iv) are obvious.
  Finally, (iv) $\Rightarrow$ (ii) follows from Theorem~\ref{thm:level4_csf}.
  \end{proof}

  \begin{remark}
  (i) \emph{Theorem \ref{thm:level4_csf} does't hold for the Abelian case},
  refer to Theorem 4.5 in \cite{Y1998}.

  (ii) \emph{In Corollary \ref{cor:level4_csf}, $F_{4}(X)$ cannot be replaced
  by $F_{3}(X)$.} Let $X:=D\bigoplus S$, where $D$ is an uncountable
  discrete space and $S$ is a non-trivial convergent sequence with its
  limit. Then, $F_{3}(X)$ is metrizable by Theorem 4.12 in \cite{Y1998},
  hence it is $csf$-countable. However, $X$ is not separable.

  (iii) \emph{The conclusion of Corollary \ref{cor:level4_csf} does not hold
  for the $snf$-countability.} Consider $\R$ with the usual
  Euclidean topology. By Corollary \ref{cor:level4_csf}, $F(\mathbb{R})$
  is an $\aleph_0$-space. However, it follows from Theorem
  \ref{thm:subspacefree} that $F_{4}(\mathbb{R})$ is not $snf$-countable.
  \end{remark}

  Next, we shall consider the question when $G_3(X)$ is $csf$-countable
  for a given space $X$. First, we recall an important lemma in \cite{S2005}.

  \begin{lemma}[\cite{S2005}] \label{lcs}
  If $\mathscr{P}$ is a countable $cs^{\ast}$-network at $x\in X$, then
  the family
  \[
  \left\{\bigcup\mathscr{F}: \mathscr{F}\subseteq \mathscr{P},
  \mathscr{F}\ \mbox{is finite}\right\}
  \]
  is a countable $cs$-network at $x$.
  \end{lemma}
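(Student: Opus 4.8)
The plan is to verify the two requirements separately: countability of the new family, and the $cs$-network property. Write $\mathscr{P}^{\ast}:=\{\bigcup\mathscr{F}:\mathscr{F}\subseteq\mathscr{P},\ \mathscr{F}\ \mbox{is finite}\}$. Countability is immediate, since $\mathscr{P}$ is countable its collection of finite subsets is countable, and $\mathscr{P}^{\ast}$ is the image of that collection under $\mathscr{F}\mapsto\bigcup\mathscr{F}$, hence countable. So the whole content is to show that $\mathscr{P}^{\ast}$ is a $cs$-network at $x$.

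To that end, fix a sequence $\{x_n:n\in\N\}$ converging to $x$ and an open neighborhood $U$ of $x$. First I would isolate the relevant members of $\mathscr{P}$ by setting $\mathscr{P}':=\{P\in\mathscr{P}:x\in P\subseteq U\}$; applying the $cs^{\ast}$-network hypothesis to the given sequence and $U$ already produces one such $P$, so $\mathscr{P}'$ is nonempty, and being a subfamily of $\mathscr{P}$ it is countable, so I may enumerate it as $\mathscr{P}'=\{P_k:k\in\N\}$. Set $Q_k:=P_1\cup\cdots\cup P_k$, an increasing chain of members of $\mathscr{P}^{\ast}$, each containing $x$ and contained in $U$. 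The goal reduces to showing that some $Q_k$ contains a tail $\{x_n:n\geq m\}$, for then $\{x\}\cup\{x_n:n\geq m\}\subseteq Q_k\subseteq U$ witnesses the $cs$-network condition with the member $Q_k\in\mathscr{P}^{\ast}$.

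The heart of the argument is a diagonal construction combined with the $cs^{\ast}$-network property. Suppose, for contradiction, that no $Q_k$ contains a tail. Then each $Q_k$ omits infinitely many terms of the sequence, so, using that the $Q_k$ increase, I can choose indices $n_1<n_2<\cdots$ with $x_{n_k}\notin Q_k$ for every $k$. The subsequence $\{x_{n_k}:k\in\N\}$ still converges to $x$, so applying the $cs^{\ast}$-network property to it and to $U$ yields some $P\in\mathscr{P}$ with $x\in P\subseteq U$ together with a further subsequence $\{x_{n_{k_j}}:j\in\N\}$ satisfying $\{x\}\cup\{x_{n_{k_j}}:j\in\N\}\subseteq P$. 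Since $x\in P\subseteq U$, this $P$ lies in $\mathscr{P}'$, say $P=P_{k_0}$. But then for every $j$ with $k_j\geq k_0$ we obtain $x_{n_{k_j}}\in P_{k_0}\subseteq Q_{k_j}$, contradicting the choice $x_{n_{k_j}}\notin Q_{k_j}$.

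I expect the main obstacle to be the bookkeeping in this last step: recognizing that the element $P$ produced by the $cs^{\ast}$-network for the diagonal subsequence must coincide with one of the enumerated $P_{k_0}$, precisely because it contains $x$ and lies inside $U$, and then exploiting the monotonicity $P_{k_0}\subseteq Q_{k_j}$ for $k_j\geq k_0$ to contradict the defining property of the chosen indices. Once this is in place the proof is complete; note that the case in which $\mathscr{P}'$ has only finitely many members is subsumed, since then the $Q_k$ are eventually constant and, if that eventual union still fails to contain a tail, the same index selection produces terms outside every $Q_k$.
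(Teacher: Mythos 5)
Your proof is correct and complete. Note that the paper itself offers no argument for this lemma; it is stated with a citation to Sakai's work \cite{S2005}, so there is no in-paper proof to compare against. Your diagonalization is the standard route: restrict to $\mathscr{P}'=\{P\in\mathscr{P}: x\in P\subseteq U\}$, form the increasing finite unions $Q_k$, extract a subsequence avoiding $Q_k$ at stage $k$ if no tail is captured, and then let the $cs^{\ast}$-property applied to that subsequence force membership in some fixed $P_{k_0}\subseteq Q_{k_j}$ for all large $j$, a contradiction. All the small points are handled correctly: the enumeration of $\mathscr{P}'$ with repetitions covers the finite case, the chosen indices can be taken strictly increasing because each $Q_k$ omits infinitely many terms, and the monotonicity of the $Q_k$ is exactly what converts ``$P=P_{k_0}$'' into the final contradiction. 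The only cosmetic remark is that monotonicity is not actually needed to select the indices $n_k$ (only each $Q_k$ omitting infinitely many terms is used there); it is needed only in the last step, which you use correctly.
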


  \begin{theorem} \label{thm:level3_csf}
  Let $X$ be a sequential and $\mu$-space. If $X'$ has a countable
  $cs^*$-network in $X$, then $F_{3}(X)$ is $csf$-countable.
  \end{theorem}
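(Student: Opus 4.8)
The plan is to produce, for each $g\in F_3(X)$, a countable $cs$-network at $g$; by Lemma~\ref{lcs} it suffices to produce a countable $cs^\ast$-network at $g$, which is more convenient because it only constrains \emph{subsequences}. Let $\mathscr P=\{P_i:i\in\N\}$ be the given countable $cs^\ast$-network of $X'$ in $X$, and write $\mathscr P^{-1}=\{P_i^{-1}:i\in\N\}$. The candidate network at a reduced word $g=a_1\cdots a_m$ (with $0\le m\le 3$) will consist of all products of at most three factors, each factor drawn from $\mathscr P\cup\mathscr P^{-1}$ or from the finite set of singletons $\{\{a_1\},\dots,\{a_m\}\}$, arranged according to the finitely many admissible ``reduction patterns'' described below. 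Since $\mathscr P$ and $\mathscr P^{-1}$ are countable and the letters of $g$ are fixed, this family is countable.

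The structural input comes from the $\mu$-space hypothesis. If $\{g_k\}\to g$ in $F_3(X)$, then $S:=\{g_k:k\in\N\}\cup\{g\}$ is compact, so $\operatorname{supp}(S)$ is bounded and hence $Z:=\overline{\operatorname{supp}(S)}$ is compact in $X$ (the same mechanism used in Theorem~\ref{thm:cf_snf_countable}, via \cite{AT2008}). First I would pass to a subsequence along which all $g_k$ have a common reduced length $\ell_0\le 3$ and a common sign pattern; since every letter of every $g_k$ lies in the compact set $Z\cup Z^{-1}$, a further subsequence makes each coordinate $g_k(j)$ converge to some $c_j\in X\cup X^{-1}$. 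Continuity of multiplication then forces the reduced word obtained from $c_1\cdots c_{\ell_0}$ to equal $g$. Because $F_n(X)=\{\,\ell\le n\,\}$ is closed, $\ell$ is lower semicontinuous, so $\ell_0\ge m$; a short case check shows that from reduced words of length $\le 3$ the only attainable limit lengths are $\ell_0$ and $\ell_0-2$, which pins the admissible patterns down to a finite list: $m=3$ forces $\ell_0=3$ with $c_j=a_j$; $m=2$ forces $\ell_0=2$; $m=1$ allows $\ell_0=1$ or a single cancellation inside a length-$3$ word; and $m=0$ (i.e.\ $g=e$) forces $\ell_0=2$ with $c_1=c_2^{-1}$.

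The decisive observation is that a \emph{cancelling} coordinate limit must lie in $X'$. Suppose two adjacent limits satisfy $c_j=c_{j+1}^{-1}$. If $c_j$ were isolated, then both $g_k(j)\to c_j$ and $g_k(j+1)\to c_j^{-1}$ would be eventually constant, giving $g_k(j)g_k(j+1)=e$ and contradicting that $g_k$ is reduced of length $\ell_0$. Hence $c_j\in X'$, and the non-trivial coordinate sequences $\{g_k(j)\}$, $\{g_k(j+1)\}$ are captured by $\mathscr P$ and $\mathscr P^{-1}$ respectively. By contrast, a \emph{surviving} coordinate limit equals one of the fixed letters $a_i$ of $g$: if $a_i$ is isolated the coordinate is eventually the constant $a_i$ (use the singleton $\{a_i\}$), and if $a_i\in X'$ it is again captured by $\mathscr P$ (or $\mathscr P^{-1}$). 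Thus every coordinate is controlled either by a fixed singleton or by the countable families $\mathscr P,\mathscr P^{-1}$, which is exactly why the network stays countable despite the possibly uncountable set of isolated points of $X$.

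Finally I would verify the $cs^\ast$-network property. Given $\{g_k\}\to g$ and an open neighbourhood $W$ of $g$ in $F_3(X)$, continuity of the multiplication map $i_{\ell_0}$ at $(c_1,\dots,c_{\ell_0})$ yields neighbourhoods $U_j$ of $c_j$ with $U_1\cdots U_{\ell_0}\subseteq W$; using that $\mathscr P$ is a $cs^\ast$-network for the $X'$-coordinates, choose factors $Q_j\in\mathscr P\cup\mathscr P^{-1}$ with $Q_j\subseteq U_j$ containing a tail of the corresponding coordinate subsequence, and take singletons for the isolated surviving coordinates. Then $Q_1\cdots Q_{\ell_0}$ is a member of our family with $\{g\}\cup\{g_k:k\ge N\}\subseteq Q_1\cdots Q_{\ell_0}\subseteq W$ along a subsequence, as required. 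I expect the main obstacle to be the bookkeeping in this last step when cancellation occurs: one must check that the \emph{product} of the chosen $\mathscr P$- and singleton-factors is genuinely contained in $W$ (not merely that each factor is small), which is where continuity of $i_{\ell_0}$ together with the compactness of $Z$ does the real work; the enumeration of reduction patterns for length $\le 3$ is routine once the cancellation-implies-$X'$ principle is in hand.
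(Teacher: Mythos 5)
Your proposal is correct and follows essentially the same route as the paper's proof: reduce to $cs^{\ast}$-networks via Lemma~\ref{lcs}, use the $\mu$-space hypothesis to trap supports in a (sequentially) compact set $Z$, lift a convergent sequence through $i_{\ell_0}$ and extract coordinatewise limits, and note that cancelling coordinate limits must lie in $X'$ so that $\mathscr{P}\cup\mathscr{P}^{-1}$ together with finitely many singletons yields a countable family of products forming a $cs^{\ast}$-network. The only organizational difference is that you treat every $g\in F_3(X)$ uniformly, whereas the paper first disposes of $F_3(X)\setminus F_1(X)$ as an open subspace homeomorphic to a subspace of $(X\bigoplus X^{-1}\bigoplus\{e\})^{3}$ and runs the detailed case analysis only at points of $F_1(X)$.
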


  \begin{proof}
  Let $\mathscr{P}$ be a countable $cs^*$-network for $X'$ in $X$. By
  Lemma~\ref{lcs}, $X$ is $csf$-countable. Hence, $(X\bigoplus X^{-1}
  \bigoplus\{e\})^n$ is $csf$-countable for each $n\in\N$. Moreover,
  since $F_3(X)\setminus F_1(X)$
  is open in $F_3(X)$ and homeomorphic to a subspace of $(X\bigoplus
  X^{-1}\bigoplus\{e\})^3$,
  $F_3(X)$ is $csf$-countable at each point of $F_3(X) \setminus F_1(X)$.
  In what follows, we shall show that $F_3(X)$ is also $csf$-countable at
  each point of $F_1(X)$. To this end, take an arbitrary point $g\in
  F_1(X)$. By Lemma~\ref{lcs}, we are done if we can prove that $F_3(X)$
  has a countable $cs^*$-network at $g$. We divide the proof in three
  cases.

  {\bf Case 1.} \emph{The point $g$ is isolated in $X\cup X^{-1}$}.

  In this case, let
  \[
  \mathscr{B}(g) :=\{gP_1^{\varepsilon_1}P_2^{\varepsilon_2},
  P_1^{\varepsilon_1} P_2^{\varepsilon_2} g: P_1, P_2\in
  \mathscr{P}\cup\{\{g\}\}, \varepsilon_1, \varepsilon_2\in\{1, -1\},
  \varepsilon_1\neq\varepsilon_2\}.
  \]
  Obviously, $|\mathscr{B}(g)|\leq \omega$. We verify that
  $\mathscr{B}(g)$ is a $cs^*$-network for $F_{3}(X)$ at $g$. Take an
  arbitrary sequence $\{x_n: n\in\N\}$ converging to $g$ in $F_{3}(X)$
  and an open neighborhood $U$ of $g$ in $F_{3}(X)$. Without loss of
  generality, we may assume that $\{x_n: n \in\N\}$ is a non-trivial
  convergent sequence such that that $x_{n}\neq x_{m}$ whenever $n\neq m$.
  By Theorem 1.5 in \cite{AOP1989}, $\mbox{supp}(\{x_n: n\in\N\}\cup\{g\})$
  is bounded in $X$. Since $X$ is a $\mu$-space,
  $\overline{\mbox{supp}(\{x_n: n\in\mathbb{N}\}\cup\{g\}})$ is compact
  in $X$.  Let
  \[
  Z:=\left(\overline{\mbox{supp}\{x_n: n\in\mathbb{N}\}\cup\{g\}}\right)
  \cup \left(\overline{\mbox{supp}\{x_n: n\in\mathbb{N}\}\cup\{g\}}
  \right)^{-1}.
  \]
  Then $Z$ is sequentially compact. For each $n\in\mathbb{N}$, pick a
  point $y_n\in Z^3\cap i_3^{-1}(x_n)$. Then it follows from the
  sequential compactness of $X$ that $\{y_n: n\in \N\}$ has a
  subsequence $\{(z_i(1), z_i(2), z_i(3)): i\in \N\}$ converging to
  $z=(z_1, z_2, z_3)$ for some $z\in Z^3$. Since $x_{n} \neq x_{m}$ for
  any $n\neq m$, the sequence $\{y_n: n\in \N\}$ is a non-trivial sequence.
  Hence, we have that $z\in i_3^{-1}(g)$, $z_i\in (X'\cup\{g\})\cup(X'
  \cup \{g\})^{-1}$ for each $i\in\{1, 2, 3\}$, $|\{i: i\leq 3, z_{i}\neq
  g\}|$ is an even number, $z_1=g$ or $z_{3}=g$, and $z_{2}\in X'\cup
  (X')^{-1}$. Then $z\in\{(g, z_{2}, z_{2}^{-1}), (z_{2}^{-1}, z_{2},
  g)\}$. Without loss of generality, we assume that $z_{1}=g, z_{2}\in
  X'$ and $z=(g, z_{2}, z_{2}^{-1})$. Pick an open neighborhood $V$ of
  $z_2$ in $X$ such that $\{g\}\times V\times V^{-1}\subseteq i_3^{-1}(U)$.
  Since $g$ is isolated in $X\cup X^{-1}$ and the sequence $\{z_{i}(1):
  i\in \mathbb{N}\}$ converges to $g$ in $X\cup X^{-1}$, $z_{i}(1)=g$
  for all but finitely many $i$. Then, since $\mathscr{P}$ is a
  $cs^*$-network for $X'$, it is easy to see that there exist $P_1,
  P_2\in \mathscr{P}$ such that $z_2 \in P_1 \subseteq V$, $z_2^{-1}\in
  P_2^{-1}\subseteq V^{-1}$ and $z\in \{g\}\times P_1\times P_2^{-1}$
  contains a subsequence of $\{(z_i(1), z_i(2), z_i(3)): i\in \N\}$.
  Hence, $g\in gP_1P_2^{-1} \subseteq gVV^{-1}\subseteq U$
  and $gP_1P_2^{-1}$ contains a subsequence of $\{x_n: n\in\N\}$.

  {\bf Case 2.} \emph{The point $g$ is non-isolated in $X\cup X^{-1}$}.

  Without loss of generality, we assume that $g\in X.$ In this case,
  let
  \[
  \mathscr{B}(g):=\left\{P_1^{\varepsilon_1} P_2^{\varepsilon_2}
  P_3^{\varepsilon_3}: P_1, P_2, P_3\in \mathscr{P} \cup\{\{e\}\},
  \varepsilon_1, \varepsilon_2, \varepsilon_3\in\{1, -1\},
  \varepsilon_1 + \varepsilon_2 + \varepsilon_3=1\right\}.
  \]
  Obviously, $|\mathscr{B}(g)|\leq \omega$. We verify that $\mathscr{B}
  (g)$ is a $cs^*$-network for $F_3(X)$ at $g$. Take an arbitrary
  sequence $\{x_n: n\in\N\}$ converging to $g$ in $F_3(X)$ and an open
  neighborhood $U$ of $g$ in $F_3(X)$. Next we shall show that there
  exists a $B\in\mathscr{B}(g)$ such that $g\in P\subseteq U$ and $P$
  contains a subsequence of $\{x_n: n\in\N\}$. Without loss of generality,
  we assume that $\{x_n: n \in\N\}$ is a non-trivial convergent
  sequence such that that $x_{n}\neq x_{m}$ whenever $n\neq m$.

  {\bf Subcase 2.1.} \emph{The sequence $\{x_n: n\in\N\}$ contains a
  subsequence $\{x_{n_i}: i \in \N\}$ which is contained in $X$.}

  Since $\mathscr{P}$ is a $cs^*$-network for $X'$ in $X$, there exists
  a $P\in \mathscr{P}$ such that $P$ contains a subsequence of
  $\{x_{n_i}: i \in \N\}$ and $g\in P\subseteq U$. Hence, $P=P\{e\}\{e\}
  \in\mathscr{B}(g)$ contains a subsequence of $\{x_{n_i}: i\in\N\}$
  and $g\in P\subseteq U$.

  {\bf Subcase 2.2.} \emph{The sequence $\{x_n: n\in\N\}$ does not contain
  any subsequence which is contained in $X$.}

  Without loss of generality,
  we may assume that $\{x_n: n\in\N\}\subseteq F_3(X)\setminus F_2(X)$,
  since $(F_2(X)\setminus F_1(X))\cup \{e\}$ is clopen in $F_3(X)$.
  Similar to the proof of Case 1, let
  \[
  Z:=\left(\overline{\mbox{supp}\{x_n: n\in\mathbb{N}\}\cup\{g\}}\right)
  \cup \left(\overline{\mbox{supp}\{x_n: n\in\mathbb{N}\}\cup\{g\}}
  \right)^{-1}.
  \]
  Then $Z^3$ is sequentially compact. For each $n\in\mathbb{N}$, pick
  a point $y_n\in Z^3\cap i_3^{-1}(x_n)$. Then $\{y_n: n\in \N\}$ has a
  subsequence $\{(z_i(1), z_i(2), z_i(3))\}_{i\in \N}$ converging to
  $z=(z_1, z_2, z_3)$ for some $z\in Z^3$. Since $g$ is assumed to be
  non-isolated in $X\cup X^{-1}$, we have
  \[
  (X^{\prime}\cup \{g\})\cup
  (X^{\prime}\cup \{g\})^{-1}=X^{\prime}\cup (X^{\prime})^{-1}.
  \]
  Moreover, since $x_{n}\neq x_{m}$ for any $n\neq m$, the sequence
  $\{y_n: n\in \N\}$ is a non-trivial sequence. Hence, we have
  $z\in i_3^{-1}(g)$, $z_{i}\in X^{\prime}\cup (X^{\prime})^{-1}$
  for each $i\in\{1, 2, 3\}$, $z_{1}=g$ or $z_{3}=g$, and $z_{2}\in
  X^{\prime}\cup (X^{\prime})^{-1}$.
  Then $z\in\{(g, z_{2}, z_{2}^{-1}), (z_{2}^{-1}, z_{2}, g)\}$. Without
  loss of generality, we assume that $z_{1}=g, z_{2}\in X'$ and
  $z=(g, z_{2}, z_{2}^{-1})$. Pick open neighborhoods $V_{1}, V_{2}$ of
  $g$ and $z_{2}$ in $X$, respectively, such that
  $V_1\times V_2\times V_2^{-1} \subseteq i_3^{-1}(U)$.
  Since $\mathscr{P}$ is a $cs^*$-network for $X'$ in $X$, it is easy
  to see that there exist $P_1, P_2, P_3\in \mathscr{P}$ such that
  $g\in P_{1}\subseteq V_{1}$, $z_{2}\in P_{2}\subseteq V_{2}$,
  $z_{2}^{-1}\in P_{3}^{-1}\subseteq V_{2}^{-1}$
  and $z\in P_1\times P_2\times P_3^{-1}$ contains a subsequence
  of $\{(z_i(1), z_i(2), z_i(3)): i\in\N\}$. Hence,
  $g\in P_1P_2P_3^{-1}\subseteq V_1 V_2 V_2^{-1}\subseteq U$
  and $P_1P_2P_3^{-1}$ contains a subsequence of $\{x_n: n\in\N\}$.

  {\bf Case 3:} $g=e$.

  In this case, let
  \[
  \mathscr{B}(e):=\left\{P_1^{\varepsilon_1}P_2^{\varepsilon_2}: P_1, P_2 \in \mathscr{P}\cup\{\{e\}\}, \varepsilon_1, \varepsilon_2, \in \{1, -1\},
  \varepsilon_1 + \varepsilon_2=0\right\}.
  \]
  Obviously, $|\mathscr{B}(g)|\leq \omega$. We verify that $\mathscr{B}(g)$
  is a $cs^*$-network for $F_{3}(X)$ at $e$. Take an arbitrary sequence
   $\{x_n: n\in\N\}$ converging to $e$ and an open neighborhood $U$ of
  $e$ in $F_{3}(X)$. Without loss of generality, we assume that $\{x_n:
  n \in\N\}$ is a non-trivial convergent sequence such that $x_n\neq x_m$
  whenever $n\neq m$. Since $(F_{2}(X)\setminus F_{1}(X))\cup \{e\}$ is
  clopen in $F_3(X)$, we assume that
  \[
  \{x_n: n\in\N\}\subseteq F_2(X)\setminus F_{1}(X)).
  \]
  By an argument similar to that in Case 2, we can show that there exist
  $P_1, P_2\in\mathscr{B}(e)$ such that $e\in P_1^{\varepsilon_1}
  P_2^{\varepsilon_2}\subseteq U$ and $P_1^{\varepsilon_1} P_2^{\varepsilon_2}$
  contains a subsequence of $\{x_n: n\in\N\}$.
  \end{proof}

  As shown by the following result, for the Abelian case, the conclusion
  of Theorem \ref{thm:level3_csf} can be strengthened significantly.

  \begin{theorem} \label{thm:abelian_csf}
  Let $X$ is be a sequential and $\mu$-space. If $X'$ has a countable
  $cs^*$-network in $X$, then $A(X)$ is csf-countable.
  \end{theorem}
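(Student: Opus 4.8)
The plan is to mirror the argument of Theorem~\ref{thm:level3_csf}, using commutativity to push it through at every level rather than stopping at length three. First I would apply Proposition~\ref{prop:csf} to reduce the $csf$-countability of $A(X)$ to that of each $A_n(X)$, and fix $n\in\N$. Since $\mathscr{P}$ is a countable $cs^{\ast}$-network for $X'$ in $X$, Lemma~\ref{lcs} shows that $X$ is $csf$-countable (a singleton works at an isolated point), hence each power $(X\oplus -X\oplus\{0\})^{n}$ is $csf$-countable. As $A_n(X)\setminus A_{n-1}(X)$ is open in $A_n(X)$ and locally homeomorphic to an open subset of that power, $A_n(X)$ is automatically $csf$-countable there; by Lemma~\ref{lcs} it then suffices to build a countable $cs^{\ast}$-network at each remaining point $g\in A_{n-1}(X)$, and the construction below in fact works verbatim at every $g\in A_n(X)$.

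Fix $g\in A_n(X)$ and write it in reduced form $g=m_1a_1+\cdots+m_ka_k$ with distinct $a_i\in X$ and $m_i\in\Z\setminus\{0\}$. Writing $-\mathscr{P}:=\{-P:P\in\mathscr{P}\}$, I would set
\[
\mathscr{B}(g):=\big\{\,Q_1+\cdots+Q_n : Q_\ell\in\mathscr{P}\cup(-\mathscr{P})\cup\big\{\{a_i\}:1\le i\le k\big\}\cup\big\{\{0\}\big\}\ \mbox{for}\ 1\le\ell\le n\,\big\},
\]
a countable family of subsets of $A_n(X)$, where $Q_1+\cdots+Q_n$ denotes the image of $Q_1\times\cdots\times Q_n$ under $i_n$. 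To verify that $\mathscr{B}(g)$ is a $cs^{\ast}$-network at $g$, take a non-trivial sequence $\{x_j:j\in\N\}$ of distinct points converging to $g$ together with an open neighbourhood $U$ of $g$. By Theorem~1.5 in \cite{AOP1989} the set $W:=\overline{\mbox{supp}(\{x_j:j\in\N\}\cup\{g\})}$ is the closure of a bounded set, hence compact since $X$ is a $\mu$-space; put $Z:=W\cup(-W)\cup\{0\}\subseteq A_1(X)$. As in the proof of Theorem~\ref{thm:level3_csf}, $Z$ is a compact subspace of the sequential space $A_1(X)$ and therefore sequentially compact, and so is $Z^n$. Lifting each $x_j$ to its reduced representation padded with $0$'s yields a point $y_j\in Z^n\cap i_n^{-1}(x_j)$; passing to a subsequence I obtain coordinatewise convergence $y_j\to z=(c_1,\ldots,c_n)$ with $i_n(z)=g$.

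The step I expect to be the main obstacle is keeping $\mathscr{B}(g)$ countable despite cancellation in $A_n(X)$: a coordinate could in principle stabilise at an arbitrary point $p$ while a second coordinate stabilises at $-p$, which would force uncountably many singletons into the network. Choosing the reduced lifts above is precisely what removes this difficulty, for in a reduced word of $A(X)$ each support point occurs with a single sign, so no two coordinates can stabilise at $+p$ and $-p$ for one and the same $p$. Hence, after stabilising the sign pattern and the zero/non-zero pattern along a further subsequence, every fixed coordinate $c_\ell$ lies in the finite set $\{a_1,\ldots,a_k\}\cup\{0\}$, while every moving coordinate converges non-trivially and so $c_\ell\in X'\cup(-X')$. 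On each moving coordinate I use that $\mathscr{P}$ is a $cs^{\ast}$-network for $X'$ to pick $P_\ell\in\mathscr{P}$ (or $-P_\ell$) with $c_\ell\in P_\ell\subseteq V_\ell$ capturing a subsequence, where the open sets $V_\ell$ are chosen small enough that $V_1\times\cdots\times V_n\subseteq i_n^{-1}(U)$; the finitely many subsequence extractions are then amalgamated exactly as in the proof of Theorem~\ref{thm:level3_csf}. Taking $Q_\ell:=P_\ell^{\pm}$ on the moving coordinates and $Q_\ell:=\{c_\ell\}$ on the fixed ones produces a member $Q_1+\cdots+Q_n\in\mathscr{B}(g)$ that contains $g=i_n(c_1,\ldots,c_n)$, lies inside $U$, and captures a subsequence of $\{x_j\}$, which is what is required.
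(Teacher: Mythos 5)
Your route is genuinely different from the paper's: the paper observes that $A(X)$ is a topological group, hence homogeneous, so it suffices to exhibit a countable $cs$-network at the single point $0$; there the lifted limit tuple cancels completely, which (together with reducedness) forces \emph{every} coordinate, eventually constant or not, into $X'\cup(-X)'$, so the family built from $\mathscr{P}\cup\{\{0\}\}$ alone suffices. You work instead at an arbitrary $g\in A_n(X)$, and that is exactly where your argument has a gap: the claim that ``every fixed coordinate $c_\ell$ lies in the finite set $\{a_1,\ldots,a_k\}\cup\{0\}$'' is false. Your justification only excludes two coordinates stabilising at $+p$ and $-p$ \emph{simultaneously}; it does not exclude a coordinate eventually constant at some $p\notin\mbox{supp}(g)\cup\{0\}$ whose cancelling partner is a \emph{moving} coordinate. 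Concretely, take $g=a_1$, a point $p\in X'\setminus\{a_1\}$ and a non-trivial sequence $b_j\to p$ with $b_j\notin\{p,a_1\}$, and put $x_j:=a_1+p-b_j$. Each $x_j$ is reduced of length $3$, $x_j\to g$ in $A_3(X)$, and the reduced lifts converge to $(a_1,p,-p)$ with the middle coordinate constantly equal to $p$. Your recipe then assigns $Q_2:=\{p\}$, a set not in the allowed pool $\mathscr{P}\cup(-\mathscr{P})\cup\{\{a_i\}\}\cup\{\{0\}\}$, so the verification does not go through as written. (A smaller slip of the same kind: when $m_i<0$ the fixed coordinates sit at $-a_i$, and $\{-a_i\}$ is not listed either.)

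The gap is repairable within your own framework: if $c_\ell=p$ is eventually constant with $p\notin\mbox{supp}(g)\cup\{0\}$, then since $i_n(z)=g$ some other coordinate of $z$ equals $-p$, and by reducedness that coordinate cannot also be eventually constant, hence it converges non-trivially and $p\in X'$; the $cs^*$-property of $\mathscr{P}$ applied to the constant sequence at $p$ then yields $P\in\mathscr{P}$ with $p\in P\subseteq V_\ell$, which may serve as $Q_\ell$ (any set containing $p$ captures a constant sequence). With that patch --- $\mathscr{P}^{\pm}$-members on all eventually constant coordinates outside $\{\pm a_i\}\cup\{0\}$, singletons only on $\{\pm a_i\}\cup\{0\}$ --- your pointwise argument does prove the theorem, and in fact yields the slightly stronger statement that each $A_n(X)$ is $csf$-countable at every point without invoking homogeneity. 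The remaining ingredients (boundedness of supports via Theorem 1.5 of \cite{AOP1989}, compactness from the $\mu$-space hypothesis, sequential compactness of $Z^n$, coordinatewise subsequence extraction) coincide with the paper's.
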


  \begin{proof}
  Since $A(X)$ is a topological group, we only need to prove that
  $A(X)$ has a countable $cs$-network at $0$. Let $\mathscr{P}$
  be a countable $cs^*$-network for $X'$ and
  \[
  \mathscr{B}(0) :=\left\{\sum_{i=1}^{2k}\varepsilon_i P_{i}:
  P_{i}\in \mathscr{P}\cup\{\{0\}\}, \varepsilon_i\in\{1, -1\},
  i\leq 2k, \sum_{j=1}^{2k}\varepsilon_j=0\right\}.
  \]
  Obviously, $|\mathscr{B}(0)|\leq \omega$.  Next, we verify
  that $\mathscr{B}(0)$ is a $cs^*$-network for $A(X)$ at $0$.

  Take an arbitrary sequence $\{x_n: n\in\N\}$ converging to $0$
  in $A(X)$ and an arbitrary open neighborhood $U$ of $0$
  in $A(X)$. Without loss of generality, we may assume that $\{x_n:
  n \in \N \}$ is a non-trivial convergent sequence. Further, we
  may assume that $x_n \neq x_m$ for any $n\neq m$. Obviously,
  we have $\{x_n: n\in \mathbb{N}\}\cup\{0\}\subseteq A_l(X)$
  for some $l\in \mathbb{N}$. Then there exists some $m\leq l$ such
  that $A_m \setminus A_{m-1}$ contains a subsequence of $\{x_n: n
  \in\N\}$. Therefore, we may assume that $\{x_n: n\in\mathbb{N}\}$
  is contained in $A_m \setminus A_{m-1}$. Let
  \[
  Z :=\left(\overline{\mbox{supp}\{x_n: n\in\mathbb{N}\}\cup\{0\}}
  \right)\cup \left(-\ \overline{\mbox{supp}\{x_n: n\in\mathbb{N}\}
  \cup\{0\}}\right).
  \]
  By the proof of Theorem \ref{thm:level3_csf}, $Z^m$ is sequentially
  compact. For each $n\in\N$, pick a point $y_n\in Z^m\cap
  i_m^{-1}(x_n)$. Then $\{y_n: n\in\N\}$ has a subsequence $\{(z_i(1),
  z_i(2), ... z_i(m)): i\in\N\}$ converging to $z=(z_1, z_2, ...
  z_m)$ for some $z\in Z^m$. Obviously, $z\in i_m^{-1}(0)$ and
  $z_j\in X'\cup (-X)'$ for each $j\leq m$. Pick an open subset
  $V_j$ in $X \bigoplus -X \bigoplus\{0\}$ for each $j\leq m$ such that
  \[
  V_1\times V_2\times \cdots \times V_m\subseteq i_m^{-1}(U).
  \]
  For each $z_j\in X' \cup (-X)'$, choose a $P_j\in \mathscr{P}$
  inductively such that $z_j\in {\varepsilon_j} P_j\subseteq V_j$,
  ${\varepsilon_j}P_j$ contains a subsequence $\{z_{i_k}(j):
  k\in\N\}$ of $\{z_i(j): i\in\N\}$ and $\{(z_{i_k}(1),
  z_{i_k}(2), ... z_{i_k}(m)): k\in\N\}$ is a subsequence of
  $\{(z_i(1), z_i(2), ... z_i(m)): i\in\N\}$, where each
  $\varepsilon_j = 1$ or $-1$. Let
  \[
  B := \varepsilon_1 P_1+\varepsilon_2 P_2+\cdots +
  \varepsilon_{2k} P_{2k}.
  \]
  Then $0\in B\subseteq U$, which verifies that $\mathscr{B}(0)$
  is a $cs^*$-network at $0$. By Lemma~\ref{lcs}, $A(X)$ is $csf$-countable.
  \end{proof}

  \begin{corollary}
  For a La\v{s}nev space $X$, if $G_2(X)$ is $csf$-countable, then so is
  $G_{3}(X)$.
  \end{corollary}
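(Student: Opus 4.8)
The plan is to verify, for both the free and the free Abelian cases, the hypotheses of Theorems~\ref{thm:level3_csf} and~\ref{thm:abelian_csf}: that $X$ is a sequential $\mu$-space and that $X'$ admits a countable $cs^{\ast}$-network in $X$. Once these are in place, Theorem~\ref{thm:level3_csf} gives that $F_{3}(X)$ is $csf$-countable, while Theorem~\ref{thm:abelian_csf} together with Proposition~\ref{prop:csf} gives that $A_{3}(X)$ is $csf$-countable; this is exactly the assertion for $G_{3}(X)$. The first two requirements are immediate. Every La\v{s}nev space is Fr\'echet--Urysohn, hence sequential, and it is stratifiable, hence paracompact and normal. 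Moreover, a closed bounded subset $B$ of a normal space is pseudocompact (an unbounded continuous function on $B$ would extend, by Tietze's theorem, to an unbounded continuous function on the whole space, contradicting boundedness), and a paracompact pseudocompact space is compact; thus $X$ is a $\mu$-space.

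The heart of the argument is producing a \emph{single} countable $cs^{\ast}$-network for $X'$ in $X$. First, since $X$ sits as a closed subspace of $G_{2}(X)$ and $csf$-countability is hereditary to subspaces, $X$ is itself $csf$-countable, so each point of $X$ has a countable $cs^{\ast}$-network consisting of subsets of $X$. Next I would show that $X'$ has countable extent. Suppose, to the contrary, that $X'$ contains an uncountable subset $\{x_{\alpha}:\alpha<\omega_{1}\}$ that is closed and discrete in $X$. Using paracompactness to separate the $x_{\alpha}$ by a discrete family of open sets, and using sequentiality together with $x_{\alpha}\in X'$ to choose sequences converging to each $x_{\alpha}$, I would reproduce the construction of Theorems~\ref{thm:level2_csf} and~\ref{thm:derivedset}: since $i_{2}$ is a closed map for paracompact $X$ by Proposition~4.8 in \cite{Y1998}, this embeds a closed copy of $S_{\omega_{1}}$ into $G_{2}(X)$, contradicting its $csf$-countability. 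Hence $X'$ has countable extent, and being a closed (hence La\v{s}nev, hence paracompact) subspace, $X'$ is Lindel\"of.

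I would then invoke the structure theory of La\v{s}nev spaces: the set $N$ of points at which $X$ fails to be first countable is $\sigma$-discrete, that is, a countable union of closed discrete subspaces, and $X\setminus N$ is metrizable. Each closed discrete piece is countable by the extent bound above, so $N$ is countable; consequently $X'\setminus N$ is a metrizable space of countable extent, hence second countable. Now the network is assembled in two parts. For the countably many points $p\in X'\cap N$, I take together the countable $cs^{\ast}$-networks at $p$ furnished by the $csf$-countability of $X$; their union is countable and captures every sequence converging to a point of $X'\cap N$. For the first countable part, I fix a metric on the open metrizable set $X\setminus N$ and a countable dense subset of the separable space $X'\setminus N$; the balls of rational radius about these centres form a countable family which, because a sequence with limit in $X'\setminus N$ is eventually trapped in $X\setminus N$, is a $cs^{\ast}$-network there. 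The union of the two countable families is the required $cs^{\ast}$-network for $X'$ in $X$, and by Lemma~\ref{lcs} this completes the verification.

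The main obstacle is precisely this assembly step, and in particular the clean use of the La\v{s}nev structure theorem: one must be certain that $N$ is genuinely $\sigma$-discrete, that $X\setminus N$ is metrizable and open, and that consequently every convergent sequence with limit in $X'\setminus N$ is eventually contained in the metrizable part, so that the rational balls really constitute a $cs^{\ast}$-network on that part. By contrast, the $S_{\omega_{1}}$-obstruction of the second paragraph is routine once the closedness of $i_{2}$ is granted, and the final reduction to Theorems~\ref{thm:level3_csf} and~\ref{thm:abelian_csf} is immediate; it is the passage from ``$G_{2}(X)$ is $csf$-countable'' to ``$X'$ has a countable $cs^{\ast}$-network in $X$'' that carries all the weight.
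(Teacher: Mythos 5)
Most of your argument runs parallel to the paper's: the reduction to Theorems~\ref{thm:level3_csf} and~\ref{thm:abelian_csf}, the verification that a La\v{s}nev space is a sequential $\mu$-space, and the $S_{\omega_1}$-embedding argument showing that $X'$ is Lindel\"of (the paper phrases this as ``analogous to (ii)~$\Rightarrow$~(iii) of Theorem~\ref{thm:level2_csf}''). The place where you diverge --- and where the proof breaks --- is the final assembly of a countable $cs^{\ast}$-network for $X'$ in $X$ from the La\v{s}nev decomposition. Your construction covers two kinds of convergent sequences: those with limit in $X'\cap N$ (handled by the point-networks at the countably many points of $N$) and those with limit in $X'\setminus N$ that are \emph{eventually inside} the metrizable part $X\setminus N$ (handled by the rational balls). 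It does not cover a sequence converging to a point $p\in X'\setminus N$ whose terms lie in $N$ (or, more generally, are not eventually in $X\setminus N$): the rational balls are subsets of $X\setminus N$ and so contain no terms of such a sequence, while the $cs^{\ast}$-networks at points $q\in N$ consist of sets chosen to work at $q$, not at $p$, and need not contain $p$ together with infinitely many terms inside a prescribed neighbourhood of $p$. This case is vacuous only if $X\setminus N$ is open, i.e.\ if the set $N$ of non-first-countable points is closed; the La\v{s}nev structure theorem gives only that $N$ is $\sigma$-closed-discrete, not that it is closed, and a sequence in $\bigcup_{n}X_n$ can perfectly well converge to a point of $X_0$ by meeting each closed discrete piece $X_n$ only finitely often. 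You flag exactly this as ``the main obstacle,'' but you do not resolve it, so the construction as written does not yield a $cs^{\ast}$-network.

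The paper closes this gap by a different mechanism: since $X$ embeds closedly in $G_2(X)$ and is therefore $csf$-countable, it contains no closed copy of $S_{\omega_1}$, and the Junnila--Yun theorem \cite{JY} then upgrades the La\v{s}nev space $X$ to a (paracompact) $\aleph$-space. A closed Lindel\"of subspace of an $\aleph$-space meets only countably many members of each locally finite family, so $X'$ acquires a countable $k$-network in $X$, hence (after taking finite unions) a countable $cs^{\ast}$-network --- no openness of $X\setminus N$ is ever needed. If you want to keep your more hands-on route, you would have to either prove that under your hypotheses $N$ is closed (which does not follow from the La\v{s}nev decomposition alone), or add a third countable family capturing sequences that converge to points of $X'\setminus N$ from within $N$; the cleanest repair is simply to replace the decomposition argument by the appeal to \cite{JY}.
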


  \begin{proof}
  Since $X$ is a La\v{s}nev space, it is sequential. By a proof analogous
  to that of the implication (ii) $\Rightarrow$ (iii) in Theorem
  \ref{thm:level2_csf}, one can show that $X'$ is a separable subspace
  of $X$. Hence it follows from \cite{JY} that $X$ is a paracompact
  $\aleph$-space, since $G_2(X)$ is $csf$-countable and $X$ is a La\v{s}nev
  space. Since $X'$ is closed and separable in $X$, $X'$ is Lindel\"{o}f,
  which shows that $X'$ is an $\aleph_{0}$-subspace in $X$. Hence $X'$
  has a countable $cs^{\ast}$-network in $X$. Therefore, $F_3(X)$ and
  $A_3(X)$ are $csf$-countable by Theorems \ref{thm:level3_csf} and
  \ref{thm:abelian_csf}, respectively.
  \end{proof}

  \begin{remark}
  (i) Let $X:=D\bigoplus K$, where $D$ is an uncountable discrete space
  and $K$ is a compact metric space. Then $A(X)$ is $csf$-countable. However,
  $F_{3}(X)$ and each $A_{n}(X)$ are first-countable by Theorem 4.5 and
  Theorem 4.12 in \cite{Y1998}, and $F_{4}(X)$ is not $csf$-countable by
  Corollary \ref{cor:level4_csf}.

  (ii) \emph{In general, the converse of Theorem \ref{thm:level3_csf} or
  Theorem \ref{thm:abelian_csf} does not hold}. Indeed, let $X$ be an
  uncountable pseudocompact topological group containing no nontrivial
  convergent sequences, as given in \cite{Si1969}. By Theorem
  \ref{thm:non_trivial_cs}, $G(X)$ is $snf$-countable, and hence $G(X)$ is
  $csf$-countable. Since $X$ is an uncountable pseudocompact topological
  group, $X'$ is uncountable. Then $X'$ must not have a countable
  $cs^*$-network in $X$. Indeed, assume on the contrary that $X'$ has
  a countable $cs^*$-network in $X$. Then $X'=X$ is an $\aleph_0$-space,
  hence it is submetrizable. Since a pseudocompact submetrizable space
  is metrizable, it follows that $X$ is metrizable, which is a contradiction
  with the assumption.
  \end{remark}

  \section{Open Questions}

  We conclude this paper by posing some open questions. Our first open
  question concerns about the Abelian case of Theorem \ref{thm:subspacefree}.
  Theorem \ref{thm:subspacefree} establishes relationships among the
  $snf$-countability and the metrizability of $F_4(X)$ and each $F_n(X)$,
  as well as some properties of $X$ for a fairly large class of topological
  spaces. It is nice to know whether a similar result on $A_4(X)$ and
  $A_n(X)$ holds for the same class of spaces. Thus, the following question
  is of interests.

  \begin{question} \label{ques:abelian_sub_snf}
  Let $X$ be a $k$-space with a regular $G_{\delta}$-diagonal. If $A_{4}
  (X)$ is $snf$-countable, must every $A_{n}(X)$ be $snf$-countable?
  \end{question}

  In the light of Theorem 4.5 and Theorem 4.12 in \cite{Y1998}, Theorem
  \ref{thm:subspacefree} and Theorem \ref{thm:level4_csf}, it is natural
  to pose the following two questions.

  \begin{question} \label{ques:g2_to_g3}
  Let $X$ be a $k$-space with a regular $G_{\delta}$-diagonal.
  If $G_{2}(X)$ is $snf$-countable, must $G_{3}(X)$ be
  $snf$-countable?
  \end{question}

  \begin{question} \label{ques:level4_abelian}
  Let $X$ be a non-discrete La\v{s}nev space. If $A_{4}(X)$ is $csf$-countable,
  must each $A_{n}(X)$ be $csf$-countable?
  \end{question}

  Note that the answers to Question \ref{ques:g2_to_g3} and Question
  \ref{ques:level4_abelian} are not known even when $X$ is a metrizable space.
  Our last open question concerns about how to characterize the $csf$-countability
  of $F(X)$ and $A(X)$ in term of properties of $X$.

  \begin{question}
  Let $X$ be a space. Is there a topological property $\mathscr {A}$ of $X$
  which characterizes the $csf$-countability of $F(X)$ or $A(X)$?
  \end{question}




\begin{thebibliography}{99}
  \bibitem{A1980} A.V. Arhangel'ski\v\i, {\it On relations between the invariants
  	of topological groups and their subspaces},  Russian Math. Surveys,
  {\bf 35} (1980), 1--23.

  \bibitem{AB2006} A.V. Arhangel'ski\v\i\ and D.K. Burke,  {\it Spaces with a
  regular $G_{\delta}$-diagonal}, Topology Appl., {\bf 153} (2006), 1917--1929.

  \bibitem{AT2008} A. Arhangel'ski\v{\i}\ and M. Tkachenko,
  Topological groups and related structures, Atlantis Press, Paris; World
  Scientific Publishing Co. Pte. Ltd., Hackensack, NJ, 2008.

  \bibitem{AOP1989} A.V. Arhangel'ski\v\i, O. G. Okunev and V. G. Pestov,  {\it
  Free topological groups over metrizable spaces}, Topology Appl., {\bf 33} (1989),
  63--76.

  \bibitem{B2012} R.Z. Buzyakova,  {\it On hereditarily normal topological groups},
  Fund. Math., {\bf 219} (2012), 245--251.

  \bibitem{E1989} R. Engelking, {\it General topology} (revised and completed
  edition), Heldermann Verlag, Berlin, 1989.

  \bibitem{FR1965} S.P. Franklin, {\it Spaces in which sequences suffice}, Fund.
  Math., {\bf 57} (1965), 107--115.

  \bibitem{Gr} G. Gruenhage, {\it Generalized metric spaces}, In: K. Kunen, J.
  E. Vaughan(Eds.), Handbook of set-theoretic topology, North-Holland, Amsterdam,
  1984, 423--501.

  \bibitem{G1971} J.A. Guthrie,  {\it A characterization of $\aleph_{0}$-spaces},
  General Topology Appl., {\bf 1} (1971), 105--110.

  \bibitem{JY} H.J. Junnila, Z. Yun,  {\it $\aleph$-spaces and spaces with a $\sigma$-hereditarily closure preserving $k$-network}, Topology Appl.,
  {\bf 192} (1992), 209--215.

  \bibitem{LLL} Z. Li, F. Lin and C. Liu,  {\it Networks on free topological
  groups}, Topology Appl., {\bf 180} (2015), 186--198.

  \bibitem{LL} F. Lin and C. Liu,  {\it $S_{\omega}$ and $S_{2}$ on free
  topological groups}, Topology Appl., {\bf 176}(2014), 10--21.

  \bibitem{Lin1996} S. Lin, {\it On sequence-covering $s$-maps},
  Adv. in Math. (China), {\bf 25} (1996), 548--551.

  \bibitem{LT1994} S. Lin and Y. Tanaka, {\it Point-countable $k$-networks,
  closed maps, and related results}, Topology Appl., {\bf 59} (1994), 79--86.

  \bibitem{MS1985} V.I. Malykhin and L.B. Shapiro,  {\it Pseudocompact groups
  without convergent sequences},  Math. Notes, {\bf 37} (1985), 59--62.

  \bibitem{MA1945} A.A. Markov, {\it On free topological groups}, Amer. Math.
  Soc. Translation, {\bf 8} (1962), 195--272.

  \bibitem{O1971} P. O$^{\prime}$Meara, {\it On paracompactness in function spaces
  with the compact-open topology}, Proc. Amer. Math. Soc., {\bf 29} (1971),
  183--189.

  \bibitem{S2005} M. Sakai,  {\it Function spaces with a countable $cs^*$-network
  at a point}, Topology Appl., {\bf 156} (2008), 117--123.

  \bibitem{Si1969} S.M. Sirota, {\it A product of topological groups, and
  extremal disconnectedness}, Math. USSR-Sb., {\bf 8} (1969), 169--180.

  \bibitem{T2013} M. Tkachenko,  {\it More on convergent sequences in free
  topological groups}, Topology Appl., {\bf 160} (2013), 1206--1213.

  \bibitem{U1991} V. Uspenskii, {\it Free topological groups of metrizable
  spaces}, Math. USSR-Izv., {\bf 37} (1991), 657--680.

  \bibitem{Y1998} K. Yamada, {\it Metrizable subspaces of free topological
  groups on metrizable spaces}, Topology Proc., {\bf 23} (1998), 379--409.

  \bibitem{Y2002} K. Yamada, {\it Fr\'echet-Urysohn spaces in free topological
  groups}, Proc. Amer. Math. Soc., {\bf 130} (2002), 2461--2469.
  \end{thebibliography}
  \end{document}